\newtheorem{theorem}{Theorem}[section]
\newtheorem{proposition}[theorem]{Proposition}
\newtheorem{lemma}[theorem]{Lemma}
\newtheorem{corollary}[theorem]{Corollary}
\theoremstyle{definition}
\newtheorem*{definition}{Definition}
\theoremstyle{remark}
\numberwithin{equation}{section}
\begin{document}

\title[A Generalized Macdonald Operator]
{A Generalized Macdonald Operator}

\author{J.F.  van Diejen}
\author{E. Emsiz}
\address{
Instituto de Matem\'atica y F\'{\i}sica, Universidad de Talca,
Casilla 747, Talca, Chile}
\email{diejen@inst-mat.utalca.cl}
\email{eemsiz@inst-mat.utalca.cl}

\subjclass[2000]{Primary: 33D52; Secondary 05E05, 17B20, 20F55} \keywords{Macdonald polynomials,
symmetric functions, root systems, reflection groups}

\thanks{Work supported in part by the {\em Fondo Nacional de Desarrollo
Cient\'{\i}fico y Tecnol\'ogico (FONDECYT)} Grants \# 1090118 and
3080006, by the {\em Anillo ACT56 `Reticulados y Simetr\'{\i}as'}
financed by the  {\em Comisi\'on Nacional de Investigaci\'on
Cient\'{\i}fica y Tecnol\'ogica (CONICYT)},
and by the {\em Programa `Reticulados y
Ecuaciones'} of the Universidad de Talca.}

\date{June 2010}

\begin{abstract}
We present an explicit difference operator diagonalized by the
Macdonald polynomials associated with an (arbitrary) admissible pair
of irreducible reduced crystallographic root systems. By the duality
symmetry, this gives rise to an explicit Pieri formula for the
Macdonald polynomials in question. The simplest examples of our
construction recover Macdonald's celebrated difference operators and associated Pieri
formulas pertaining to the minuscule and quasi-minuscule weights.
As further by-products, explicit expansions and Littlewood-Richardson type formulas are obtained for the Macdonald polynomials associated with a special class of small weights.
\end{abstract}

\maketitle

\section{Introduction}\label{sec1}
In a landmark paper, Macdonald introduced his nowadays widespread families of basic hypergeometric orthogonal polynomials associated with (admissible pairs
of) crystallographic root systems \cite{mac:orthogonal}. It has been shown with the aid of the representation theory of double affine Hecke algebras that the polynomials in question form the joint eigenbasis of an algebra of commuting basic hypergeometric difference operators,
which is isomorphic to the Weyl-group invariant part of the group algebra over the weight lattice
\cite{mac:affine}. 
Even though for the simplest difference operators in the algebra at issue elegant explicit expressions can already be found in Macdonald's original work, to date it remains a challenging open problem to derive analogous explicit formulas providing
a complete system of generators for this algebra valid uniformly for all root systems
\cite[Sec. 4.4]{mac:affine}.

In this paper we present a compact explicit formula for a generalization of Macdonald's celebrated difference operators from \cite{mac:orthogonal}. Upon specialization, this formula gives rise to a complete system of generators in the case of all classical root systems and to various novel explicit independent generators in the case of the exceptional root systems.
In view of the remarkable duality symmetry enjoyed by the Macdonald polynomials \cite{mac:affine}, our difference operator formula also entails a new explicit Pieri-type
recurrence formula generalizing the explicit Pieri formulas known from Macdonald's theory (see e.g. \cite[Sec. 3]{las:inverse}).
One important reason motivating the quest for compact explicit Pieri formulas is that in the special case of the type $A$ root systems they have proven to be a powerful tool for deriving closed formulas for the expansion coefficients of Macdonald polynomials in terms of the standard bases of elementary and complete symmetric functions \cite{las-sch:inversion} (see also
\cite{die-lap-mor:determinantal}  and \cite{ste:graded} for alternative methods to compute Macdonald polynomials explicitly based on direct use of the difference equations and orthogonality relations, respectively). Indeed, as a further spin-off we also arrive at explicit expansions and Littlewood-Richardson type formulas for the Macdonald polynomials associated with a special class of small weights.

The paper is structured as follows. In Section \ref{sec2} the definition of the Macdonald polynomials is recalled. In Sections \ref{sec3} and \ref{sec4} we introduce our generalized Macdonald difference operator and apply the duality symmetry to derive the corresponding Pieri formula and its aforementioned by-products. 
Some of the technical aspects of the proof of our main result (Theorem \ref{diagonalization:thm} below) are relegated to an appendix  at the end of the paper.

\section{Macdonald polynomials}\label{sec2}
Let $E$ denote a real
finite-dimensional  Euclidean vector space with inner product $\langle \cdot ,\cdot \rangle$ and let $(R,S)$ be an admissible pair of  irreducible reduced root systems spanning $E$. (So $S=R$ or $S=R^\vee$, where $R^\vee:=\{\alpha^\vee\mid \alpha\in R\}$ with $\alpha^\vee:=2\alpha /\langle \alpha ,\alpha\rangle$.) 
We have a natural bijection $\alpha\mapsto \alpha_*$ of $R$ onto $S$ given by
 $\alpha_*:=u_\alpha^{-1}\alpha $ with $u_\alpha : =1$ if $S=R$ and $u_\alpha := \langle \alpha ,\alpha\rangle /2$ if $S=R^\vee$.
Let us denote the root lattice and the weight lattice of $R$ by $Q$ and $P$, respectively, and  let us write $Q^+$ for the semigroup generated by a (fixed) choice of positive roots $R^+$  and $P^+$  for the corresponding cone of dominant weights. The weight lattice is partially ordered by the dominance ordering, viz. for $\lambda ,\mu\in P$:
$\lambda\geq \mu$ if and only if $\lambda-\mu\in Q^+$.

The standard basis for the group algebra
$\mathbb{C}[P]$ is given by formal exponentials
$e^\lambda$, $\lambda\in P$ characterized by the relations $e^0=1$, 
$e^\lambda e^\mu=e^{\lambda +\mu}$. For $f=\sum_{\lambda\in P} f_\lambda e^\lambda$ in $\mathbb{C}[P]$ we define $\int f := f_0$ and $\overline{f}:=\sum_{\lambda\in P}\overline{f}_\lambda e^{-\lambda}$ (where $\overline{f}_\lambda$ refers to the complex conjugate of $f_\lambda$).  
Let $0<q<1$ and let $t:R\cup R^\vee \rightarrow (0,1)$ denote a root multiplicity function such that $t_{\alpha^\vee}=t_\alpha$ and $t_{w\alpha}=t_\alpha$ for all $w$ in the Weyl group $W$.
We will use the notation $q_\alpha:=q^{u_\alpha}$ and $(a;q)_m :=\prod_{k=0}^{m-1} (1-aq^k)$ with $m$ nonnegative integral or $\infty$.
For $f, g\in \mathbb{C}[P]$ the Macdonald inner product is now defined as \cite{mac:orthogonal,mac:affine}
\begin{subequations}
\begin{equation}\label{ipa}
\langle f,g\rangle_\Delta = |W|^{-1} \int  f \overline{g} \Delta ,
\end{equation}
where $|W|$ refers to the order of $W$ and
\begin{equation}\label{ipb}
\Delta := \prod_{\alpha \in R} \frac{( e^\alpha;q_\alpha)_\infty}{( t_\alpha e^\alpha;q_\alpha)_\infty} .
\end{equation}
\end{subequations}
In general (i.e. when $t_\alpha$ is {\em not} a positive integral power of $q_\alpha$), the product $\Delta$ \eqref{ipb} is not an element of $\mathbb{C}[P]$, however, it follows from Macdonald's theory that the RHS of \eqref{ipa} still makes sense as the constant term of a formal power series and, moreover, thus defined provides a nondegenerate positive inner product on  $\mathbb{C}[P]$ \cite[Sec. 5.1]{mac:affine}.

\begin{definition}[Macdonald Polynomials \cite{mac:orthogonal,mac:affine}]
For $\lambda\in P^+$ let
$m_\lambda := \sum_{\eta \in W\lambda} e^\eta$ (where the sum is meant over the $W$ orbit of $\lambda$). 
The Macdonald polynomials $p_\lambda$, $\lambda\in P^+$ (associated with the pair $(R,S)$) are defined as the unique polynomials in $\mathbb{C}[P]$ of the form
\begin{subequations}
\begin{equation}\label{mp-d1}
p_\lambda = m_\lambda + \sum_{\mu\in P^+,\, \mu < \lambda } a_{\lambda \mu} (q,t) m_\mu
\end{equation}
with expansion coefficients $a_{\lambda \mu} (q,t) $ such that
  \begin{equation}\label{mp-d2}
  \langle p_\lambda , m_\mu \rangle_\Delta =0 \quad \text{for all} \ \mu\in P^+ \ \text{with}\ \mu < \lambda  .
  \end{equation}
 \end{subequations}
\end{definition}

It is immediate from the definition that the Macdonald polynomials are orthogonal for distinct dominant weights that are comparable in the dominance ordering. Remarkably, one has in fact that \cite{mac:orthogonal}
\begin{equation}
  \langle p_\lambda , p_\mu \rangle_\Delta =0 \quad \text{for all} \ \lambda ,\mu\in P^+ \ \text{with}\ \lambda\neq \mu  .
  \end{equation}
In other words, the Macdonald polynomials $p_\lambda$, $\lambda \in P^+$ constitute an orthogonal basis of the $W$-invariant subalgebra $\mathbb{C}[P]^W$ of the group algebra with respect to the Macdonald inner product \eqref{ipa}, \eqref{ipb}
(where the Weyl group acts on $\mathbb{C}[P]$ via $we^\lambda :=e^{w\lambda}$, $w\in W$).

\section{The generalized Macdonald operator}\label{sec3}

\begin{definition}[Small Weight]
A dominant weight $\omega $ of a root system will be called {\em small} if $\langle \omega,\alpha^\vee\rangle \leq 2$ for any positive root $\alpha$.
\end{definition}
Well-known special cases of the small weights are the minuscule weights, i.e. dominant weights $\omega$ such that $\langle \omega,\alpha^\vee\rangle \leq 1$ for any positive root $\alpha$, and the quasi-minuscule weight, i.e. the weight $\omega=\alpha_0$ with $\alpha_0^\vee$ being the maximal root of the dual root system (in which case the maximum $\langle \omega,\alpha^\vee\rangle= 2$ is assumed only once, viz. when $\alpha = \omega$) \cite{bou:groupes}.
More generally, a weight $\omega$ is small if either: $\omega =0$, or $\omega$ is the sum of two (not necessarily distinct) minuscule weights, or $\omega$ is a small fundamental weight (cf. Table \ref{smallweights:tab}).
In this section we will introduce---for any small weight $\omega$ of $S^\vee$--- a difference operator $D_\omega:\mathbb{C}[P]^W\rightarrow \mathbb{C}[P]^W$ acting diagonally on the Macdonald basis.

Let $\Lambda$ denote the weight lattice of $S^\vee$ and let $\Lambda^+\subset \Lambda$ be the dominant cone (corresponding to our fixed choice of the positive roots). For $\omega\in \Lambda^+$ we define the saturated set
\begin{equation}\label{sat:eq}
\Lambda_\omega := \bigcup_{\mu\in\Lambda^+,\, \mu\leq \omega} W\mu\subset\Lambda
\end{equation}
(where the comparison of $\mu$ and $\omega$ is with respect to the dominance order of $S^\vee$) and
for $x\in E$ we define the translation operator
$T_x:\mathbb{C}[P]\rightarrow \mathbb{C}[P]$  by $T_x (e^\lambda):=q^{\langle \lambda ,x\rangle }e^\lambda$. 
For $\omega$ small our generalized Macdonald operator is now given by
\begin{subequations}
\begin{equation}\label{gmoa}
D_\omega:=\sum_{\nu\in\Lambda_\omega}\sum_{\eta\in W_\nu(w_\nu^{-1}\omega)} U_{\nu,\eta} V_\nu T_\nu ,
\end{equation}
where $w_\nu\in W$ is such that $w_\nu\nu\in\Lambda^+$, $W_\nu:=\{ w\in W \mid w\nu = \nu \}$, and
\begin{eqnarray}
&& V_\nu:=\prod_{\substack{\alpha\in R\\ \langle\alpha_*,\nu\rangle>0}} v_\alpha(e^\alpha)
      \prod_{\substack{\alpha\in R\\ \langle\alpha_*,\nu\rangle=2}} v_\alpha(q_\alpha e^\alpha)  ,\\
&& U_{\nu,\eta}:=\prod_{\substack{\alpha\in R_\nu \\ \langle\alpha_*,\eta\rangle>0}} v_\alpha(e^\alpha)
\prod_{\substack{\alpha\in R_\nu \\ \langle\alpha_*,\eta\rangle=2}} v_\alpha\bigl(q_\alpha^{-1} e^{-\alpha}\bigr) , \label{gmoc}
\end{eqnarray}
\end{subequations}
with
$v_\alpha (z) :=t_\alpha^{-1/2}(1-t_\alpha z)/(1-z)$. Here 
we have employed the notation $R_x:=  \{ \alpha\in R \mid \langle \alpha ,x \rangle =0\}$  for the subsystem of $R$ corresponding to the stabilizer $W_x$ of $x\in E$.

For  $x\in E$ and a full lattice $L\subset E$ (in our case $L=P$ or $L=\Lambda$), we define the evaluation homomorfism from $\mathbb{C}[L]$ into $\mathbb{C}$ by $e^\lambda (x):=q^{\langle\lambda ,x\rangle}$, $\lambda\in L$. The  main result of this paper is the following theorem.

\begin{theorem}[Diagonalization]\label{diagonalization:thm}
For $\omega\in \Lambda^+$ small and $\lambda\in P^+$, one has that
\begin{subequations}
\begin{equation}\label{ev:eq}
D_{\omega}p_\lambda=E_\omega^S (\lambda+\rho_{t,S}) p_\lambda,
\end{equation}
where 
\begin{eqnarray}
&& E_\omega^S:=\sum_{\mu\in\Lambda^+,\, \mu\leq \omega}  \epsilon_{\omega ,\mu} m_\mu\in\mathbb{C}[\Lambda], \\
&& \epsilon_{\omega ,\mu}:= \sum_{\eta\in W_\mu\omega}
\prod_{\substack{\alpha\in S_\mu^+ \\  |\langle\alpha  ,\eta\rangle |=1}} t_\alpha^{\langle\alpha,\eta\rangle}  
\end{eqnarray}
($S^+_\mu :=  \{ \alpha\in S^+ \mid \langle \alpha ,\mu \rangle =0\}$), and
\begin{equation}
\rho_{t,S} :=\frac{1}{2}\sum_{\alpha\in S^+}\log_q(t_\alpha) \alpha .
\end{equation}
\end{subequations}
\end{theorem}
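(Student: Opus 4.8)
The plan is to establish the eigenvalue equation by a standard triangularity-plus-constant-term argument, the same strategy Macdonald used for his minuscule and quasi-minuscule operators. First I would check that $D_\omega$ genuinely maps $\mathbb{C}[P]^W$ into itself: the coefficients $V_\nu$ and $U_{\nu,\eta}$ are built from the rational functions $v_\alpha$, so one must verify that the apparent poles along $e^\alpha=1$ cancel after summing over the orbit sum hidden in $\Lambda_\omega$ and the inner sum over $\eta\in W_\nu(w_\nu^{-1}\omega)$; this is a local computation near each reflection hyperplane, reducing to rank-one (and, because $\omega$ is small, to the $A_1$ and rank-two subsystems generated by pairs of roots with $\langle\alpha_*,\nu\rangle\le 2$). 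Once regularity is known, $W$-invariance follows because the whole sum is manifestly assembled from $W$-orbits.

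Next I would compute the action of $D_\omega$ on a monomial symmetric function $m_\lambda$ and show it is \emph{triangular} with respect to the dominance order: $D_\omega m_\lambda = \bigl(\sum_{\nu\in\Lambda_\omega} c_\nu(\lambda)\bigr) m_\lambda + (\text{lower order})$, where $c_\nu(\lambda)$ is the leading ($e^0$) part of $U_{\nu,\eta}V_\nu$ times $q^{\langle\lambda,\nu\rangle}$. The key point is that $T_\nu$ shifts $e^\lambda$ by the scalar $q^{\langle\lambda,\nu\rangle}$ while $V_\nu,U_{\nu,\eta}$ expand (as formal power series in the $e^{-\alpha}$, $\alpha\in R^+$) into $1$ plus terms that strictly lower the weight; since $\Lambda_\omega$ is saturated and $\omega$ is small, all the translates $\nu$ that occur satisfy $\lambda+\nu\le\lambda$ in the relevant ordering or get compensated, so no weight exceeding $\lambda$ survives. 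Triangularity together with the fact that the $p_\lambda$ are the Gram–Schmidt orthogonalization of the $m_\lambda$ (Definition of Macdonald polynomials) forces $D_\omega p_\lambda = E_\omega(\lambda) p_\lambda$ for some scalar $E_\omega(\lambda)$, \emph{provided} the diagonal coefficient $\sum_\nu c_\nu(\lambda)$ depends on $\lambda$ only through $\lambda$ (so distinct $\lambda$ give, generically, distinct eigenvalues — or more precisely, provided the leading coefficient is the same on $m_\lambda$ and all lower $m_\mu$ appearing, which is what makes the triangular operator diagonalizable on this particular basis). The cleaner route, which I would actually follow, is: show $D_\omega$ is self-adjoint with respect to $\langle\cdot,\cdot\rangle_\Delta$ and triangular; self-adjointness plus triangularity immediately yields diagonality on the $p_\lambda$.

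The remaining and most substantial task is to \emph{identify the eigenvalue} as $E_\omega^S(\lambda+\rho_{t,S})$. For this I would evaluate both sides of $D_\omega p_\lambda = E_\omega(\lambda)p_\lambda$ at a suitable point. Since $p_\lambda$ does not vanish at generic points, it suffices to compute $E_\omega(\lambda)$ from the constant term, i.e. to sum $\sum_{\nu\in\Lambda_\omega}(\text{leading coefficient of }U_{\nu,\eta}V_\nu)\,q^{\langle\lambda,\nu\rangle}$. Here the definition $v_\alpha(z)=t_\alpha^{-1/2}(1-t_\alpha z)/(1-z)$ matters: the constant term of $V_\nu$ is $\prod t_\alpha^{-1/2}$ over the roots with $\langle\alpha_*,\nu\rangle>0$ counted with the indicated multiplicity, and evaluating the $U_{\nu,\eta}$-factors at the spectral point $\lambda+\rho_{t,S}$ is exactly the mechanism that converts the product $\prod v_\alpha$ over $R_\nu$ into the finite product $\prod_{\alpha\in S_\mu^+,\,|\langle\alpha,\eta\rangle|=1} t_\alpha^{\langle\alpha,\eta\rangle}$ appearing in $\epsilon_{\omega,\mu}$; one uses $q^{\langle\rho_{t,S},\alpha^\vee\rangle}=\prod \dots$ to collapse the $v_\alpha$ into pure powers of $t_\alpha$, a computation local to each parabolic subsystem $R_\nu$. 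Organizing the double sum over $(\nu,\eta)$ by the dominant representative $\mu=w_\nu\nu$ of the $W$-orbit, and matching $W_\nu(w_\nu^{-1}\omega)$ against $W_\mu\omega$ under $w_\nu$, turns $\sum_{\nu,\eta}$ into $\sum_{\mu\le\omega}\sum_{\eta\in W_\mu\omega}(\cdots)= E_\omega^S(\lambda+\rho_{t,S})$. The genuinely hard part is this last bookkeeping: checking that the geometric identities relating $R$-roots and $S^\vee$-roots (via $\alpha\mapsto\alpha_*$ and the admissibility of $(R,S)$) make the $v_\alpha$-products telescope correctly and uniformly across all root systems, including verifying the pole cancellations in the regularity step for the small fundamental weights case by case — this is presumably the content deferred to the appendix.
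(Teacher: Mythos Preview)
Your overall three-step strategy (polynomiality, triangularity with explicit leading coefficient, self-adjointness) matches the paper's approach exactly, and concluding via the defining properties of $p_\lambda$ is correct. However, there is a genuine gap in your regularity sketch. You only mention poles along $e^\alpha=1$, and these do cancel by Weyl symmetry. But $V_\nu$ also contains factors $v_\alpha(q_\alpha e^\alpha)$ (whenever $\langle\alpha_*,\nu\rangle=2$) and $U_{\nu,\eta}$ contains factors $v_\alpha(q_\alpha^{-1}e^{-\alpha})$ (whenever $\langle\alpha_*,\eta\rangle=2$), producing poles on the \emph{shifted} hyperplanes $1+\langle x,\alpha_*\rangle=0$. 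These are not removed by Weyl symmetry, nor by any rank-one or rank-two computation internal to a single $\nu$. The paper's mechanism is different from what you describe: the residue of the $V_\nu$-type pole in the term at $\nu$ cancels against the residue of the $U$-type pole in the term at $\underline{\nu}:=\nu-\alpha_*^\vee$ (for the unique $\alpha$ making $\underline{\nu}$ dominant), using that on the shifted hyperplane one has $x+\underline{\nu}=r_\alpha(x+\nu)$, so the values of $m_\lambda$ match. This inter-$\nu$ pairing of residues across different translation points is the crux of the polynomiality argument and is organized by two combinatorial lemmas about small weights (checked case by case); your local rank-reduction picture does not capture it.

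Your eigenvalue identification is also more convoluted than needed and partly misstated. The $U_{\nu,\eta}$ are functions of $x$, not of $\lambda$, so ``evaluating $U_{\nu,\eta}$ at the spectral point $\lambda+\rho_{t,S}$'' is not what happens. Once polynomiality is known, the paper simply reads off the coefficient of $m_\lambda$ in $D_\omega m_\lambda$ from the asymptotics of $(D_\omega m_\lambda)(x)$ as $x$ tends to infinity in the anti-dominant chamber: each $v_\alpha(z)$ tends to $t_\alpha^{\pm 1/2}$ as $z\to\infty$ or $z\to 0$, so every $U_{\nu,\eta}V_\nu$ collapses to a pure monomial in the $t_\alpha$, and the sum over $(\nu,\eta)$ regrouped by dominant representatives gives $E_\omega^S(\lambda+\rho_{t,S})$ directly. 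No special-point evaluation or telescoping identity involving $q^{\langle\rho_{t,S},\alpha^\vee\rangle}$ is required.
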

\begin{proof}
Once stated the result, its proof is relatively straightforward following the original ideas of Macdonald from \cite{mac:orthogonal}. Specifically, the verification consists of three technical steps: (i) to check that the generalized Macdonald operator $D_\omega$ maps the space $\mathbb{C}[P]^W$ into itself, (ii) to check that the operator in question is triangular with respect to the basis of symmetric monomials $m_\lambda$, $\lambda\in P^+$ with corresponding eigenvalues given by $E^S_\omega (\lambda + \rho_{t,S})$,  $\lambda\in P^+$, and finally, (iii) to check that $D_\omega$ is symmetric with respect to the inner product \eqref{ipa}, \eqref{ipb}.
An outline of the details involved in the verification of each step is provided in the appendix at the end of the paper. From (i), (ii) and (iii) it follows that $E^S_\omega (\lambda + \rho_{t,S})^{-1}D_\omega p_\lambda$ satisfies the defining properties \eqref{mp-d1}, \eqref{mp-d2} of the Macdonald polynomial  $p_\lambda$, which proves the theorem.
\end{proof}

The operator $D_\omega$ maps $\mathbb{C}[P]$ into its quotient field.  However, it is immediate from its diagonal action on the Macdonald basis that the operator maps the subalgebra $\mathbb{C}[P]^W$ into itself (cf. the first step of the above proof). The simultaneous diagonalization by the Macdonald basis moreover implies the commutativity of the operators $D_\omega:\mathbb{C}[P]^W\rightarrow \mathbb{C}[P]^W$ corresponding to distinct choices of the small weight $\omega$.
\begin{corollary}[Commutativity]\label{commutativity:thm}
Let $\omega$ and $\tilde{\omega}$ be small weights of $S^\vee$. Then the corresponding generalized Macdonald operators $D_\omega:\mathbb{C}[P]^W\rightarrow \mathbb{C}[P]^W$ and $D_{\tilde{\omega}}:\mathbb{C}[P]^W\rightarrow \mathbb{C}[P]^W$ commute.
\end{corollary}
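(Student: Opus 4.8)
The plan is to read the commutativity off directly from the simultaneous diagonalization in Theorem \ref{diagonalization:thm}, so that no genuinely new computation is needed. First I would recall the fact established in Section \ref{sec2}: the Macdonald polynomials $p_\lambda$, $\lambda\in P^+$, form a linear basis of $\mathbb{C}[P]^W$. This is what makes a diagonal action on $\{p_\lambda\}$ determine an operator on $\mathbb{C}[P]^W$ completely, and hence what lets one verify an operator identity by checking it on this basis alone.

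Next I would note that for each small weight $\omega$ of $S^\vee$ the operator $D_\omega$ genuinely restricts to an endomorphism of $\mathbb{C}[P]^W$. A priori $D_\omega$ only maps $\mathbb{C}[P]$ into its quotient field, but the diagonal relation $D_\omega p_\lambda=E_\omega^S(\lambda+\rho_{t,S})\,p_\lambda$ exhibits the image of each basis vector $p_\lambda$ as an element of $\mathbb{C}[P]^W$, so by linearity $D_\omega(\mathbb{C}[P]^W)\subseteq\mathbb{C}[P]^W$. Consequently the composites $D_\omega D_{\tilde\omega}$ and $D_{\tilde\omega}D_\omega$ are well-defined linear operators on $\mathbb{C}[P]^W$, and it makes sense to ask whether they agree.

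Finally I would apply Theorem \ref{diagonalization:thm} twice. For every $\lambda\in P^+$,
\[
D_\omega D_{\tilde\omega}\,p_\lambda
= E_{\tilde\omega}^S(\lambda+\rho_{t,S})\,D_\omega p_\lambda
= E_\omega^S(\lambda+\rho_{t,S})\,E_{\tilde\omega}^S(\lambda+\rho_{t,S})\,p_\lambda ,
\]
where I used that $E_{\tilde\omega}^S(\lambda+\rho_{t,S})$ is a complex scalar, so it pulls out of the linear operator $D_\omega$. The right-hand side is manifestly symmetric under interchanging $\omega$ and $\tilde\omega$, since it is just a product of two scalars. Hence $D_\omega D_{\tilde\omega}$ and $D_{\tilde\omega}D_\omega$ take the same value on every basis vector $p_\lambda$ and therefore coincide on all of $\mathbb{C}[P]^W$.

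I do not expect any real obstacle here: all the substantive work has been done in Theorem \ref{diagonalization:thm}. The only point that deserves a moment's care is the well-definedness of the compositions, i.e. that $D_\omega$ maps $\mathbb{C}[P]^W$ into itself rather than merely into the quotient field of $\mathbb{C}[P]$; but this is exactly step (i) in the proof of Theorem \ref{diagonalization:thm}, and in any case it follows at once from the diagonalization, as noted in the remark preceding the corollary.
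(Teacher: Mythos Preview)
Your proposal is correct and matches the paper's own reasoning: the paper does not give a separate proof for the corollary but simply remarks that the simultaneous diagonalization on the Macdonald basis implies the commutativity, together with the observation (your ``well-definedness'' point) that $D_\omega$ preserves $\mathbb{C}[P]^W$. You have merely written out this one-line argument in full detail.
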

It is well-known that this type of commutativity in fact carries over to the commutativity of the difference operators in question viewed as operators in the quotient field of $\mathbb{C}[P]$.

When $\omega$ is minuscule $\Lambda_\omega=W\omega$.  The operator $D_\omega$ reduces in this case to Macdonald's difference operator \cite[\S 5]{mac:orthogonal}
\begin{subequations}
\begin{equation}\label{macop:m}
D_\omega=\sum_{\nu\in W \omega} V_\nu T_\nu  .
\end{equation}
When $\omega$ is quasi-minuscule $\Lambda_\omega=W\omega\cup\{ 0\}$. The operator $D_\omega$ then reduces to
\begin{equation}\label{macop:qm}
D_\omega=\sum_{\nu\in W\omega}(V_\nu T_\nu +U_{0,\nu}) ,
\end{equation}
\end{subequations}
which differs from Macdonald's difference operator \cite[\S 6]{mac:orthogonal} by an additive constant of the form
$\sum_{\nu\in W\omega} (V_\nu+U_{0,\nu})=\epsilon_{\omega ,0}+m_\omega (\rho_{t,S})$ (as Macdonald's operator in  \cite[\S 6]{mac:orthogonal}  annihilates the constant polynomial $p_0=1$).

By varying $\omega$ over the small weights of $S^\vee$, our formula $D_\omega$ \eqref{gmoa}-\eqref{gmoc} provides various explicit generators for the algebra of commuting difference operators diagonalized by the Macdonald polynomials \cite{mac:affine}. To avoid algebraic dependencies it is enough to consider only the generators corresponding to the small {\em fundamental} weights of $S^\vee$. The following lemma clarifies the structure of the saturated set $\Lambda_\omega$ \eqref{sat:eq} in this situation.

\begin{lemma}\label{small-weights1:lem}
Let $\omega$ be a small weight and let $\mu$ be a dominant weight such that $\mu \leq \omega$. Then  $\mu$ is small. Furthermore, if $\omega$ is moreover fundamental then  $\mu$ is fundamental or $\mu =0$.
\end{lemma}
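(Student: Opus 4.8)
The plan is to prove both assertions by a direct analysis of the dominance order together with the defining inequality $\langle\omega,\alpha^\vee\rangle\le 2$ for small weights. Throughout, ``small'' and ``fundamental'' refer to the root system $S^\vee$, so $\alpha$ ranges over $S^\vee$ and $\alpha^\vee$ over $(S^\vee)^\vee=S$; I will write $R_1$ for the root system $S^\vee$ to keep the notation light in the sketch. For the first assertion, suppose $\mu\in\Lambda^+$ with $\mu\le\omega$, and let $\alpha$ be any positive root of $R_1$. The aim is to bound $\langle\mu,\alpha^\vee\rangle$ by $2$. Since $\mu$ is dominant we have $\langle\mu,\alpha^\vee\rangle\ge 0$, so I only need an upper bound.

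The key step is the following observation about the dominance order: if $\mu\le\omega$ and $\mu$ is dominant, then for every positive root $\alpha$ one has $\langle\mu,\alpha^\vee\rangle\le\langle\omega,\alpha^\vee\rangle$ whenever $\langle\omega-\mu,\alpha^\vee\rangle\ge 0$, and in general $\mu$ lies in the convex hull of $W\omega$ (a standard fact: the dominant weights below $\omega$ in the dominance order are exactly the dominant weights in $\mathrm{conv}(W\omega)$, for $\omega$ dominant). Hence $\langle\mu,\alpha^\vee\rangle$ is bounded by $\max_{w\in W}\langle w\omega,\alpha^\vee\rangle=\max_{w\in W}\langle\omega,w^{-1}\alpha^\vee\rangle\le\max_{\beta\in R_1}|\langle\omega,\beta^\vee\rangle|$, and since $\omega$ is small this maximum is at most $2$. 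Therefore $\langle\mu,\alpha^\vee\rangle\le 2$, i.e. $\mu$ is small. I would state the convex-hull fact as a known lemma from the theory of root systems (it is, e.g., implicit in Bourbaki \cite{bou:groupes}); alternatively one argues directly by downward induction on the dominance order, peeling off one root $\alpha_i$ at a time via $\mu\mapsto\mu+\alpha_i$ and checking the $\alpha^\vee$-pairing cannot exceed the bound.

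For the second assertion, assume in addition that $\omega$ is fundamental, say $\omega=\omega_k$, and let $\mu$ be dominant with $\mu\le\omega$ and $\mu\ne 0$; I must show $\mu$ is itself a fundamental weight. Write $\mu=\sum_i c_i\omega_i$ with $c_i\in\mathbb{Z}_{\ge 0}$, not all zero. The idea is to use the constraint coming from the single simple root attached to $\omega_k$: when one expands $\omega_k-\mu$ in simple roots $\omega_k-\mu=\sum_i m_i\alpha_i$ with $m_i\ge 0$, the coefficient $m_k$ is forced by taking the pairing against the fundamental coweight dual to $\alpha_k$, giving a relation between $m_k$ and the $c_i$. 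Combined with smallness ($\sum_i c_i\langle\omega_i,\alpha^\vee\rangle\le 2$ for all positive $\alpha$, and in particular $c_i\le 2$ for each $i$ since $\langle\omega_i,\alpha_i^\vee\rangle=1$) and a case check using the inverse Cartan matrix (the coefficients of the $\alpha_j$ in $\omega_k$ are strictly positive for all $j$, and those adjacent to $k$ in the Dynkin diagram are $\ge$ the ``slope'' of the fundamental weight), one sees that $\mu=\omega_k$ or $\mu$ has a single $c_i=1$. I would organize this as: (a) reduce to the case where $\mu$ is small by the first part; (b) observe that a small weight with $\sum c_i\ge 2$ must be either $2\omega_i$ for a minuscule $\omega_i$, or $\omega_i+\omega_j$ with $\omega_i,\omega_j$ minuscule (this is the classification of small weights recalled just before the lemma, cf. Table \ref{smallweights:tab}); (c) show none of these non-fundamental small weights lies strictly below a fundamental weight, by comparing the sum of marks $\sum_i m_i$ in the decomposition into simple roots, since $\omega_k-\mu\in Q^+$ requires $\mathrm{ht}(\omega_k)\ge\mathrm{ht}(\mu)$ in the appropriate sense and a direct inspection of the tables of fundamental weights shows the fundamental weights are ``minimal'' among small weights of a given support type.

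The main obstacle I anticipate is the second assertion: the first part is a clean consequence of the convex-hull description of the dominance order, but ruling out the possibility that a sum of two minuscule weights (or twice a minuscule weight) sits below a fundamental weight seems to genuinely require either the explicit classification of small fundamental weights in Table \ref{smallweights:tab} together with a case-by-case comparison using the known expansions of fundamental weights in simple roots, or a slicker uniform argument via the height function $\mathrm{ht}_S(\mu)=\langle\mu,\sum_{\alpha\in S^+}\alpha^\vee/2\rangle$ (half-sum of positive coroots paired with $\mu$): if $\mu\le\omega$ with $\mu\ne\omega$ then $\mathrm{ht}_S(\mu)<\mathrm{ht}_S(\omega)$, and one would want to show that among small weights, the fundamental ones are precisely those of minimal height in their $W$-orbit closure poset. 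Making that last step uniform, rather than type-by-type, is the delicate point; I expect the cleanest writeup uses the explicit list of small fundamental weights and checks directly that each non-fundamental small weight fails to be dominated by any fundamental weight.
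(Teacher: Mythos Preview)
Your proposal is correct. For the first assertion you invoke the convex-hull description of the dominance order ($\mu\in\mathrm{conv}(W\omega)$), which certainly works, but the paper's argument is shorter and more elementary: for any positive root $\alpha$ one has the chain
\[
\langle \mu, \alpha^\vee\rangle \leq \langle \mu, \alpha_0^\vee\rangle \leq \langle \omega, \alpha_0^\vee\rangle \leq 2,
\]
where $\alpha_0^\vee$ denotes the highest coroot. The first inequality uses that $\alpha_0^\vee-\alpha^\vee$ lies in the positive coroot cone together with the dominance of $\mu$; the second uses that $\alpha_0^\vee$, being the highest root of the dual system, is a dominant coweight, together with $\omega-\mu\in Q^+$. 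This bypasses the need to cite the convex-hull characterization.

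For the second assertion, the paper does precisely what you anticipate in your final paragraph: it verifies the claim on a case-by-case basis from the explicit list of small fundamental weights (Table~\ref{smallweights:tab}), and explicitly remarks that a classification-free proof in the spirit of \cite{ste:partial} would be interesting but is not provided. Your tentative uniform strategies (height function, support-type minimality) are reasonable heuristics, but the paper does not carry any of them through either; so on this half your plan and the paper's treatment coincide.
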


The first part of this lemma is straightforward as $\langle \mu, \alpha^\vee\rangle\leq
\langle \mu, \alpha_0^\vee\rangle\leq \langle \omega, \alpha_0^\vee\rangle \leq 2$
for any positive root $\alpha$. The second part of the lemma is readily verified on a case by case basis by listing the small weights for all irreducible reduced root systems (but it would nevertheless be interesting to find a uniform classification-free proof of this fact in the spirit of \cite{ste:partial}).
Table \ref{smallweights:tab} provides a list of all small fundamental weights ordered by the dominance ordering. With the information in this table the construction of the generators by means of our formula for $D_\omega$ \eqref{gmoa}-\eqref{gmoc} becomes completely explicit. The number of independent generators found this way is given by the number of small fundamental weights. For the classical root systems all fundamental weights are small, so we cover a complete system of $n:=\text{Rank}(R)$ generators. The number of explicit independent generators that our formula fails to cover for the exceptional root systems is respectively: $1$ ($R=E_6$), $3$ ($R=E_7$), $6$ ($R=E_8$), $2$ ($R=F_4$), $1$ ($R=G_2$). For completeness the table also displays the number of ordered chains, which corresponds to the number of previously known explicit generators covered by Macdonald's difference operators associated with the minuscule and quasi-minuscule weights. Only for the root system $R=G_2$ no progress is made, as in this case the only nontrivial small weight is actually the quasi-minuscule one. Fortunately, in this situation the missing explicit formula for a second independent generator (violating the structure of $D_\omega$) is already known \cite{die-ito:difference}.

\begin{table}[hbt]
\vspace{2ex}
\begin{tabular}{|c|c|c|c|}
\hline
$R$ &small fundamental weights  &  \# weights & \# chains  \\ \hline\hline
$A_n$ &$\omega_1,\, \omega_2,\ldots ,\,\omega_n$  & $n$ &  $n$  \\ \hline
$B_n$ & $(0<)\omega_1<\omega_2<\cdots <\omega_{n-1},\,\omega_n$ & $n$ & $2$  \\ \hline
$C_n$ & $\begin{array}{c} \omega_1<\omega_3<\cdots <\omega_{2[\frac{n-1}{2}]+1} \\
(0<)\omega_2<\omega_4<\cdots <\omega_{2[\frac{n}{2}]} \end{array}$
 & $n$ & $2$  \\ \hline
$D_n$ &  $\begin{array}{c} \omega_1<\omega_3<\cdots <\omega_{2[\frac{n-1}{2}]-1} \\
(0<)\omega_2<\omega_4<\cdots <\omega_{2[\frac{n}{2}]-2}  \\ \omega_{n-1},\, \omega_n \end{array}$ & $n$ & $4$ \\ \hline
$E_6$ &   $\omega_1<\omega_5,\, (0<)\omega_2,\, \omega_6<\omega_3$ & $5$ &  $3$ \\ \hline
$E_7$ & $(0<)\omega_1<\omega_6,\,\omega_7<\omega_2$ & $4$ & $2$ \\ \hline
$E_8$ & $(0<)\omega_8<\omega_1$ & $2$ &  $1$ \\ \hline
$F_4$ &  $(0<)\omega_4<\omega_1$ & $2$ &  $1$  \\ \hline
$G_2$ &   $(0<)\omega_1$ & $1$ &  $1$ \\ \hline
\end{tabular}
\vspace{2ex}
\caption{Ordered chains of small fundamental weights, numbered in accordance with the conventions of the tables in Bourbaki \cite{bou:groupes}. 
(The comparison with the zero weight indicates that the lowest fundamental weight in the chain is quasi-minuscule rather than minuscule. By $[\frac{n-1}{2}]$ and $[\frac{n}{2}]$  we refer to the integral parts obtained by truncation.)} \label{smallweights:tab}
\end{table}

\section{Associated Pieri formula}\label{sec4}
It is well-known  that difference equations for the Macdonald polynomials immediately give rise to  Pieri-type recurrence relations by the remarkable duality symmetry \cite{mac:affine}. In order to exhibit the Pieri formulas associated with the difference equations of Theorem \ref{diagonalization:thm} explicitly, it will be convenient at this point to make the dependence of the construction on the admissible pair $(R,S)$ manifest by attaching it to all relevant objects. With this convention,
the renormalized Macdonald polynomial
\begin{subequations}
\begin{equation}
P_{\lambda}^{(R,S)}:= c_\lambda^{(R,S)} p_\lambda^{(R,S)} ,
\end{equation}
with
\begin{equation}
c_\lambda^{(R,S)} := q^{\langle \lambda,\rho_{t,R^\vee} \rangle}
\prod_{\alpha\in R^+}\frac{(q_\alpha^{\langle \alpha^\vee , \rho_{t,S}\rangle };   q_\alpha)_{\langle \alpha^\vee ,\lambda\rangle}}{(t_\alpha q_\alpha^{\langle \alpha^\vee ,\rho_{t,S}\rangle};q_\alpha)_{\langle \alpha^\vee,\lambda\rangle}} ,
\end{equation}
\end{subequations}
satisfies the  {\em Duality Symmetry} \cite[Sec. 5.3]{mac:affine}
 \begin{equation}\label{duality:eq}
P_{\lambda}^{(R,S)}(\mu+\rho_{t,R^\vee})=
   P_{\mu}^{(S^\vee,R^\vee)}(\lambda+\rho_{t,S}),\quad \text{for any} \;\lambda\in P^+,\mu\in\Lambda^+
\end{equation}
(so 
$P_{\lambda}^{(R,S)}(\rho_{t,R^\vee})=1$ in this normalization).

Combination of Theorem \ref{diagonalization:thm} with Eq. \eqref{duality:eq} entails the following recurrence relation.
\begin{theorem}[Pieri Formula]\label{pieri:thm}
For $\lambda, \omega\in P^+$ with $\omega$ small, one has that
\begin{eqnarray}\label{pieri:eq}
\lefteqn{E^{R^\vee}_\omega P_{\lambda}^{(R,S)}
=} && \\
&& \sum_{\substack{\nu\in P_\omega\\\lambda+\nu\in P^+}}\sum_{\eta\in W_\nu(w_\nu^{-1}\omega)}
 U^{(S^\vee,R^\vee)}_{\nu,\eta}(\lambda+\rho_{t,S}) V^{(S^\vee,R^\vee)}_\nu(\lambda+\rho_{t,S}) P_{\lambda+\nu}^{(R,S)}  ,\nonumber
\end{eqnarray}
where $P_\omega :=\bigcup_{\mu\in P^+,\mu\leq \omega} W\mu\subset P$, and $E^{R^\vee}$, 
$V^{(S^\vee,R^\vee)}_\nu$, $U^{(S^\vee,R^\vee)}_{\nu,\eta}$ are of the form in Theorem \ref{diagonalization:thm} (with the pair $(R,S)$ replaced by $(S^\vee,R^\vee)$).
\end{theorem}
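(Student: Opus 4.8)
The plan is to derive the Pieri formula \eqref{pieri:eq} as a direct consequence of Theorem \ref{diagonalization:thm} and the Duality Symmetry \eqref{duality:eq}. First I would fix a small weight $\omega\in P^+$ (now viewed as a weight of $R^\vee$, matching the left-hand side), together with an arbitrary $\lambda\in P^+$, and evaluate the diagonalization identity \eqref{ev:eq} for the \emph{dual} admissible pair $(S^\vee,R^\vee)$ at the spectral point $\mu+\rho_{t,R^\vee}$, where $\mu\in\Lambda^+$ is arbitrary. Writing out $D_\omega^{(S^\vee,R^\vee)}$ via \eqref{gmoa}--\eqref{gmoc} and using the evaluation homomorphism $e^\lambda(x)=q^{\langle\lambda,x\rangle}$, the action of the translation operator $T_\nu$ becomes $(T_\nu p_\mu^{(S^\vee,R^\vee)})(x)=p_\mu^{(S^\vee,R^\vee)}(x+\nu)$, so Theorem \ref{diagonalization:thm} reads
\begin{equation*}
\sum_{\nu\in P_\omega}\sum_{\eta\in W_\nu(w_\nu^{-1}\omega)} U^{(S^\vee,R^\vee)}_{\nu,\eta}(x)\, V^{(S^\vee,R^\vee)}_\nu(x)\, p_\mu^{(S^\vee,R^\vee)}(x+\nu)= E_\omega^{R^\vee}(\mu+\rho_{t,R^\vee})\, p_\mu^{(S^\vee,R^\vee)}(x)
\end{equation*}
with $x=\lambda+\rho_{t,S}$ (note the saturated set of $\omega$ with respect to $R^\vee$ is exactly $P_\omega$, and the coefficients $U,V$ of the dual pair carry the superscript $(S^\vee,R^\vee)$).

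Next I would translate both sides through the Duality Symmetry \eqref{duality:eq}, which in the renormalized variables gives $P_\mu^{(S^\vee,R^\vee)}(\lambda+\rho_{t,S})=P_\lambda^{(R,S)}(\mu+\rho_{t,R^\vee})$ and, applied with $\mu$ replaced by $\mu+\nu$ (legitimate whenever $\mu+\nu\in\Lambda^+$), $P_{\mu+\nu}^{(S^\vee,R^\vee)}(\lambda+\rho_{t,S})=P_\lambda^{(R,S)}(\mu+\nu+\rho_{t,R^\vee})$. Passing from the non-normalized $p$ to the normalized $P$ on the left-hand side introduces ratios $c_{\mu+\nu}^{(S^\vee,R^\vee)}/c_\mu^{(S^\vee,R^\vee)}$ of the explicit $q$-shifted factorial prefactors; here one must check that these ratios, after substituting $x=\lambda+\rho_{t,S}$, recombine cleanly and do not alter the shape of the $U\cdot V$ coefficients beyond what is asserted — I expect the $\rho_{t,R^\vee}$-exponential part of $c$ to produce harmless overall powers of $q$ that cancel, and the finite $q$-Pochhammer part to telescope. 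After this substitution the identity becomes, for every $\mu\in\Lambda^+$,
\begin{equation*}
E_\omega^{R^\vee}(\mu+\rho_{t,R^\vee})\, P_\lambda^{(R,S)}(\mu+\rho_{t,R^\vee})= \sum_{\substack{\nu\in P_\omega\\ \mu+\nu\in\Lambda^+}}\sum_{\eta\in W_\nu(w_\nu^{-1}\omega)} U^{(S^\vee,R^\vee)}_{\nu,\eta}(\lambda+\rho_{t,S})\, V^{(S^\vee,R^\vee)}_\nu(\lambda+\rho_{t,S})\, P_{\lambda+\nu}^{(R,S)}(\mu+\rho_{t,R^\vee}).
\end{equation*}
Recognizing $E_\omega^{R^\vee}(\mu+\rho_{t,R^\vee})$ as the value at $\mu+\rho_{t,R^\vee}$ of the element $E_\omega^{R^\vee}\in\mathbb{C}[\Lambda]$ (so the left side is the value of $E_\omega^{R^\vee}P_\lambda^{(R,S)}\in\mathbb{C}[P]^W$ — note $E_\omega^{R^\vee}$ lies in $\mathbb{C}[P]$ here since $\omega$ is a weight of $R^\vee$), we have an equality of the \emph{values} of two elements of $\mathbb{C}[P]^W$ at all points $\mu+\rho_{t,R^\vee}$, $\mu\in\Lambda^+$.

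The final step is to upgrade this pointwise equality to an identity in $\mathbb{C}[P]^W$, i.e. to argue that the evaluations $f\mapsto f(\mu+\rho_{t,R^\vee})$, $\mu\in\Lambda^+$, separate elements of $\mathbb{C}[P]^W$. This is the standard closure argument underlying all such Pieri formulas: since $\{P_\mu^{(S^\vee,R^\vee)}\}_{\mu\in\Lambda^+}$ is a basis of $\mathbb{C}[\Lambda]^W$ and is triangular with respect to the monomial basis, the matrix $\bigl(P_\mu^{(S^\vee,R^\vee)}(\lambda+\rho_{t,S})\bigr)_{\mu,\lambda}$ is (block-)unitriangular, hence invertible on any finite down-closed set of weights, so by the duality \eqref{duality:eq} the evaluation functionals at $\{\mu+\rho_{t,R^\vee}\}$ are linearly independent and separate points. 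Therefore the two sides agree as elements of $\mathbb{C}[P]^W$, which is precisely \eqref{pieri:eq} after reindexing $\mu+\nu\rightsquigarrow$ condition $\lambda+\nu\in P^+$ on the right — wait, more carefully, the summation condition that survives is the one displayed, $\nu\in P_\omega$ with $\lambda+\nu\in P^+$, which comes from requiring $P_{\lambda+\nu}^{(R,S)}$ to be a genuine Macdonald polynomial; I would note that terms with $\lambda+\nu\notin P^+$ drop out because the corresponding $V^{(S^\vee,R^\vee)}_\nu(\lambda+\rho_{t,S})$ vanishes (a zero of the numerator $1-t_\alpha e^\alpha$ of some $v_\alpha$ meets the evaluation point), exactly as in Macdonald's original derivation. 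The main obstacle I anticipate is bookkeeping the renormalization factors $c_{\lambda+\nu}^{(R,S)}/c_\lambda^{(R,S)}$ and confirming they are exactly absorbed so that the coefficients appearing are the \emph{same} $U^{(S^\vee,R^\vee)}_{\nu,\eta}$, $V^{(S^\vee,R^\vee)}_\nu$ evaluated at $\lambda+\rho_{t,S}$, with no leftover factor; everything else is a formal consequence of Theorem \ref{diagonalization:thm}, the duality, and the separation-of-points lemma.
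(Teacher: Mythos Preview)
Your approach---apply Theorem~\ref{diagonalization:thm} to the dual pair $(S^\vee,R^\vee)$, invoke the duality \eqref{duality:eq}, and use that the evaluations at $\mu+\rho_{t,R^\vee}$, $\mu\in\Lambda^+$, separate $\mathbb{C}[P]^W$---is exactly the paper's. Two bookkeeping points need straightening out, however. First, the ``main obstacle'' you flag is a phantom: in the eigenvalue equation $D_\omega^{(S^\vee,R^\vee)}P_\mu^{(S^\vee,R^\vee)}=E_\omega^{R^\vee}(\mu+\rho_{t,R^\vee})P_\mu^{(S^\vee,R^\vee)}$ the polynomial index $\mu$ is the \emph{same} on both sides (only the evaluation point shifts by $\nu$), so the single scalar $c_\mu^{(S^\vee,R^\vee)}$ cancels uniformly and no ratios $c_{\mu+\nu}/c_\mu$ or telescoping ever enter. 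Second, and relatedly, your indices slip: to apply duality to $P_\mu^{(S^\vee,R^\vee)}(\lambda+\nu+\rho_{t,S})$ one replaces $\lambda$ by $\lambda+\nu$ (requiring $\lambda+\nu\in P^+$), not $\mu$ by $\mu+\nu$; the constraint ``$\mu+\nu\in\Lambda^+$'' in your displayed identity is ill-posed since $\nu\in P$ while $\mu\in\Lambda$. Once this is corrected to $\lambda+\nu\in P^+$, the argument goes through verbatim, and the vanishing of $V^{(S^\vee,R^\vee)}_\nu(\lambda+\rho_{t,S})$ when $\lambda+\nu\notin P^+$ (which you correctly identify at the end) is precisely what lets one impose or drop that restriction freely---exactly as in the paper's proof.
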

\begin{proof}
Because the stated Pieri formula \eqref{pieri:eq} is a polynomial identity, it suffices to check that it is valid when evaluated at the points $x=\mu + \rho_{t,R^\vee}$ for all $\mu\in \Lambda^+$.
Upon evaluation at such a point and application of the
duality symmetry in Eq. \eqref{duality:eq}, the LHS goes over into $E^{R^\vee}_\omega ( \mu + \rho_{t,R^\vee}) P_{\mu}^{(S^\vee,R^\vee)} (\lambda + \rho_{t,S})$ and on the RHS $P_{\lambda+\nu}^{(R,S)}$ gets replaced by $P_{\mu}^{(S^\vee,R^\vee)} (\lambda +\nu + \rho_{t,S})$. We can now freely eliminate the restriction $\lambda +\nu\in P^+$ in the first sum on the RHS as the coefficient  $V^{(S^\vee,R^\vee)}_\nu (\lambda+\rho_{t,S})$ vanishes when this condition is not met. Indeed, this vanishing is caused
by the zero stemming from a factor of the type
$v_\beta(e^{-\beta})$ (if $\langle \beta_* ,\nu\rangle =-1$)
or  $v_\beta(e^{-\beta})v_\beta(q_\beta e^{-\beta})$ (if $\langle \beta_* ,\nu\rangle =-2$), with
$\beta $  a simple root of $S^\vee $ such that
$\langle \beta_*, \lambda +\nu\rangle <0$
(so $e^\beta (\lambda +\rho_{t,S})=t_\beta$ or $e^\beta (\lambda +\rho_{t,S})\in \{ t_\beta , q_\beta t_\beta\}$, respectively).
We thus end up with the evaluation at 
$x=\lambda + \rho_{t,S}$ of the equation
$E^{R^\vee}_\omega ( \mu + \rho_{t,R^\vee}) P_{\mu}^{(S^\vee,R^\vee)} = D_\omega^{(S^\vee,R^\vee)}P_{\mu}^{(S^\vee,R^\vee)} $, which holds by Theorem \ref{diagonalization:thm}.
\end{proof}

For $\nu\in W\omega$ the coefficients in the Pieri formula \eqref{pieri:eq} (i.e. the coefficients corresponding to the highest-weight orbit) already follow from analogous computations in the context of the double affine Hecke algebra
\cite[Sec. 5.3]{mac:affine}. In particular, for $\omega$ minuscule and for $\omega$ quasi-minuscule Theorem \ref{pieri:thm} recovers the Pieri formulas associated with the difference eigenvalue equations for the Macdonald  operators \eqref{macop:m} and \eqref{macop:qm}, respectively (cf. also \cite[Sec. 3]{las:inverse}):
\begin{subequations}
\begin{equation}
m_\omega P_{\lambda}^{(R,S)}=
\sum_{\substack{\nu\in W\omega \\ \lambda+\nu\in P^+}} V^{(S^\vee,R^\vee)}_\nu(\lambda+\rho_{t,S}) P_{\lambda+\nu}^{(R,S)}  
\end{equation}
for $\omega\in P^+$ minuscule, and 
\begin{equation}
\bigl( m_\omega -m_\omega ( \rho_{t,R^\vee})\bigr) P_{\lambda}^{(R,S)}=
\sum_{\substack{\nu\in W\omega \\ \lambda+\nu\in P^+}} V^{(S^\vee,R^\vee)}_\nu(\lambda+\rho_{t,S}) \left( P_{\lambda+\nu}^{(R,S)}  -P_{\lambda}^{(R,S)} \right)
\end{equation}
\end{subequations}
for $\omega\in P^+$ quasi-minuscule.

With the aid of the Pieri formula, it is not difficult to compute the Macdonald polynomials explicitly for small weights.

\begin{theorem}[Macdonald polynomials for small weights]\label{macsw:thm}
For $\omega\in P^+$ small, the (monic) Macdonald polynomial is given by
\begin{equation}\label{expansion}
p_\omega^{(R,S)}=
\sum_{\substack{\mu_1<\cdots <\mu_\ell=\omega\\ \ell =1,\ldots ,n_\omega}}
(-1)^{\ell -1} 
E^{R^\vee}_{\mu_1}
\prod_{1\leq k\leq \ell -1}
U^{(S^\vee,R^\vee)}_{\mu_{k},\mu_{k+1}} (\rho_{t,S}) ,
\end{equation}
with $n_\omega := | P_\omega\cap P^+ |$ and where the summation is meant over (all strictly ascending chains of) the {\em dominant weights} only.
\end{theorem}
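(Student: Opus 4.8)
The plan is to prove the expansion \eqref{expansion} by an inductive unravelling of the Pieri formula \eqref{pieri:eq}, using the triangular structure of the operators involved. First I would specialize Theorem \ref{pieri:thm} to $\lambda = 0$. Since $P_0^{(R,S)}=1$ and the Macdonald polynomial is monic at weight $\omega$ (so that $P_\omega^{(R,S)}$ is a scalar multiple of $p_\omega^{(R,S)}$, the scalar being the leading normalization $c_\omega^{(R,S)}$), the Pieri formula at $\lambda=0$ expresses $E^{R^\vee}_\omega\cdot 1$ as a sum over $\nu\in P_\omega\cap P^+$ of $U^{(S^\vee,R^\vee)}_{0,\eta}V^{(S^\vee,R^\vee)}_\nu$ evaluated at $\rho_{t,S}$ times $P_\nu^{(R,S)}$. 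Here I must check that when $\lambda=0$ the inner sum over $\eta\in W_\nu(w_\nu^{-1}\omega)$ and the coefficient $V_\nu^{(S^\vee,R^\vee)}(\rho_{t,S})$ collapse to something manageable; in fact $V_\nu$ evaluated at $\rho_{t,S}$ is, up to the leading term, essentially $1$ on the relevant orbit because of the way $\rho_{t,S}$ interacts with the $v_\alpha$ factors — this is the same computation underlying the constant-term identity $\sum_{\nu\in W\omega}(V_\nu+U_{0,\nu})=\epsilon_{\omega,0}+m_\omega(\rho_{t,S})$ quoted after the quasi-minuscule case. The upshot of this first step is a recursion of the schematic form
\begin{equation*}
E^{R^\vee}_\omega = \sum_{\mu\in P_\omega\cap P^+} b_{\omega,\mu}\, P_\mu^{(R,S)},\qquad b_{\omega,\omega}=1,
\end{equation*}
with $b_{\omega,\mu}$ for $\mu<\omega$ given by the evaluated Pieri coefficients.

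Next I would iterate. By Lemma \ref{small-weights1:lem} every dominant $\mu\le\omega$ is again small, so each $P_\mu^{(R,S)}$ occurring above satisfies its own such recursion, and one may keep substituting. Since the dominance order restricted to $P_\omega\cap P^+$ is a finite poset with $\omega$ as its unique maximal element, this process terminates, and solving the triangular linear system $E^{R^\vee}_\omega=\sum_\mu b_{\omega,\mu}P_\mu^{(R,S)}$ for $P_\omega^{(R,S)}$ yields $P_\omega^{(R,S)}$ as an alternating sum over strictly ascending chains $\mu_1<\mu_2<\cdots<\mu_\ell=\omega$ of dominant weights, weighted by $E^{R^\vee}_{\mu_1}\prod_k b_{\mu_k,\mu_{k+1}}$, with sign $(-1)^{\ell-1}$. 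This is exactly the Möbius-type inversion of a triangular unipotent matrix: if $M = I + N$ with $N$ strictly upper-triangular along the poset, then $(I+N)^{-1}=\sum_{\ell\ge 1}(-1)^{\ell-1}N^{\ell-1}$, and $N^{\ell-1}$ enumerates chains of length $\ell$. The bound $\ell\le n_\omega = |P_\omega\cap P^+|$ is automatic since a strictly ascending chain in that poset has at most $n_\omega$ elements. To land precisely on formula \eqref{expansion} I then need to identify $b_{\mu_k,\mu_{k+1}}$ with $U^{(S^\vee,R^\vee)}_{\mu_k,\mu_{k+1}}(\rho_{t,S})$; this requires extracting, from the evaluated Pieri coefficient at $\lambda=0$, precisely the factor $U_{\mu_k,\mu_{k+1}}$ and showing the remaining $V$-type and multiplicity factors telescope to $1$ once the monic normalization $P_\mu^{(R,S)}=c_\mu^{(R,S)}p_\mu^{(R,S)}$ is accounted for.

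Finally, I would reconcile the normalizations: \eqref{expansion} is stated for the \emph{monic} $p_\omega^{(R,S)}$, whereas the Pieri formula is naturally phrased for the renormalized $P_\lambda^{(R,S)}$. One must therefore track the ratio $c_\mu^{(R,S)}/c_\omega^{(R,S)}$ through the chain and verify it combines with the $V$-coefficients and the orbit sums to leave exactly $U^{(S^\vee,R^\vee)}_{\mu_k,\mu_{k+1}}(\rho_{t,S})$ in each link of the chain and $E^{R^\vee}_{\mu_1}$ at the bottom, with no leftover scalar. I expect \textbf{this bookkeeping of coefficients} — correctly evaluating $V_\nu^{(S^\vee,R^\vee)}$ and the inner $\eta$-sum at $\rho_{t,S}$ and matching against the $c_\mu^{(R,S)}$ ratios — to be the main obstacle; the structural part (triangular inversion, chain enumeration, termination via Lemma \ref{small-weights1:lem}) is routine. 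A clean way to organize the coefficient check is to evaluate the already-established identity $E^{R^\vee}_{\tilde\omega}P^{(R,S)}_\lambda = \sum U^{(S^\vee,R^\vee)}_{\nu,\eta}V^{(S^\vee,R^\vee)}_\nu P^{(R,S)}_{\lambda+\nu}$ at the special dominant weight and use the known behaviour of the $v_\alpha$ factors at $\rho_{t,S}$ (many become $t_\alpha^{\pm1/2}$, producing the $\epsilon$-type products already appearing in $E^S_\omega$), rather than manipulating the general expressions directly.
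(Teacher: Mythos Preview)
Your overall strategy---specialize the Pieri formula to $\lambda=0$ and invert the resulting unitriangular system by summing over ascending chains---is exactly the paper's approach. The structural part (M\"obius-type inversion, termination via Lemma~\ref{small-weights1:lem}) is indeed routine and you have it right.

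Where your argument stalls is precisely the ``bookkeeping of coefficients'' you flag, and the paper resolves it with two sharp observations that you are circling but have not pinned down. First, the inner $\eta$-sum does not merely simplify: it collapses to a single term, because $U^{(S^\vee,R^\vee)}_{\mu,\eta}(\rho_{t,S})=0$ for every $\eta\in W_\mu\omega$ with $\eta\neq\omega$. This vanishing is the same simple-root mechanism already invoked in the proof of Theorem~\ref{pieri:thm} (a factor $v_\beta(e^{-\beta})$ with $\beta$ simple in $R_\mu$ picks up a zero at $\rho_{t,S}$). Second, the $V$-factor and the normalization constant combine \emph{exactly}: one has $V^{(S^\vee,R^\vee)}_\mu(\rho_{t,S})\,P_\mu^{(R,S)}=p_\mu^{(R,S)}$ for $\mu$ small, i.e.\ $V_\mu(\rho_{t,S})=1/c_\mu^{(R,S)}$, not ``essentially $1$''. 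With these two facts the Pieri formula at $\lambda=0$ reads directly
\[
E^{R^\vee}_\omega=\sum_{\mu\in P^+,\ \mu\le\omega} U^{(S^\vee,R^\vee)}_{\mu,\omega}(\rho_{t,S})\,p_\mu^{(R,S)},
\]
which is already unitriangular in the \emph{monic} $p_\mu$'s (diagonal entry $U_{\omega,\omega}=1$), so no further ratio $c_\mu/c_\omega$ needs to be tracked through the chains. Inverting gives \eqref{expansion} immediately.

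A minor slip: in your first displayed description you wrote $U^{(S^\vee,R^\vee)}_{0,\eta}$; the first subscript of $U$ is the shift $\nu$, not the base weight $\lambda$, so it should be $U^{(S^\vee,R^\vee)}_{\nu,\eta}$.
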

\begin{proof}
For $\lambda =0$ the Pieri formula \eqref{pieri:eq} reduces to
\begin{equation*}
E^{R^\vee}_\omega 
=
 \sum_{\mu\in P^+,\,\mu\leq \omega}
 U^{(S^\vee,R^\vee)}_{\mu,\omega}(\rho_{t,S})p_{\mu}^{(R,S)}  ,
\end{equation*}
where it was used that
$V^{(S^\vee,R^\vee)}_\mu(\rho_{t,S}) P_{\mu}^{(R,S)} =p_\mu^{(R,S)}$ for
$\mu$ small and that
the factor
$U^{(S^\vee,R^\vee)}_{\mu,\eta}(\rho_{t,S})$ with $\eta\in W_\mu (\omega )$ vanishes unless $\eta=\omega$ (cf. the proof of Theorem \ref{pieri:thm}).
The stated result now follows upon a unitriangular matrix inversion.
\end{proof}
For the nonreduced root systems a related expansion formula
was obtained in \cite[Thm. 5.1]{kom-nou-shi:kernel} and Pieri formulas can be found
in \cite[Sec. 6]{die:properties}. Combination of Theorems \ref{pieri:thm} and \ref{macsw:thm} immediately entails a Littlewood-Richardson type linearization formula for the product of two Macdonald polynomials with one of the weights small.
\begin{theorem}[Littlewood-Richardson type rule]
For $\lambda,\omega\in P^+$ with $\omega$ small, the product of the corresponding Macdonald polynomials expands as
\begin{eqnarray}
\lefteqn{P_{\omega}^{(R,S)}P_{\lambda}^{(R,S)}=
c_\omega^{(R,S)} \sum_{\substack{\mu_1<\cdots <\mu_\ell=\omega\\ \ell =1,\ldots ,n_\omega}}
(-1)^{\ell -1} 
\prod_{1\leq k\leq \ell -1}
U^{(S^\vee,R^\vee)}_{\mu_{k},\mu_{k+1}} (\rho_{t,S})  } &&  \\
&& \times \sum_{\substack{\nu\in P_{\mu_1}\\\lambda+\nu\in P^+}}\sum_{\eta\in W_\nu(w_\nu^{-1}\mu_1)}
 U^{(S^\vee,R^\vee)}_{\nu,\eta}(\lambda+\rho_{t,S}) V^{(S^\vee,R^\vee)}_\nu(\lambda+\rho_{t,S}) P_{\lambda+\nu}^{(R,S)} \nonumber
\end{eqnarray}
(adopting the notational conventions of Theorem \ref{macsw:thm}).
\end{theorem}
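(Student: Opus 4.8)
The plan is to combine the explicit expansion of the small Macdonald polynomial from Theorem~\ref{macsw:thm} with the Pieri recurrence from Theorem~\ref{pieri:thm}, treating the product $P_\omega^{(R,S)} P_\lambda^{(R,S)}$ as the result of multiplying $P_\lambda^{(R,S)}$ by a polynomial whose Macdonald expansion we already control. Concretely, I would first pass from the monic polynomial $p_\omega^{(R,S)}$ to the renormalized one via $P_\omega^{(R,S)} = c_\omega^{(R,S)} p_\omega^{(R,S)}$, so that
\begin{equation*}
P_\omega^{(R,S)} = c_\omega^{(R,S)}
\sum_{\substack{\mu_1<\cdots <\mu_\ell=\omega\\ \ell =1,\ldots ,n_\omega}}
(-1)^{\ell -1}\, E^{R^\vee}_{\mu_1}
\prod_{1\leq k\leq \ell -1} U^{(S^\vee,R^\vee)}_{\mu_{k},\mu_{k+1}}(\rho_{t,S}),
\end{equation*}
using Theorem~\ref{macsw:thm}. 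The coefficients $U^{(S^\vee,R^\vee)}_{\mu_k,\mu_{k+1}}(\rho_{t,S})$ are scalars (evaluations in $\mathbb{C}$), so they can be pulled out of the eventual product, and the only genuinely operator-valued (or rather, polynomial-valued) factor in each term of the chain sum is the single symmetric function $E^{R^\vee}_{\mu_1}\in\mathbb{C}[P]^W$.

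Next I would multiply both sides by $P_\lambda^{(R,S)}$ and apply the Pieri formula \eqref{pieri:eq} of Theorem~\ref{pieri:thm} term by term: for each chain $\mu_1<\cdots<\mu_\ell=\omega$ the factor $E^{R^\vee}_{\mu_1} P_\lambda^{(R,S)}$ is exactly the left-hand side of \eqref{pieri:eq} with the small weight taken to be $\mu_1$ (note that $\mu_1$ is small by Lemma~\ref{small-weights1:lem}, so Theorem~\ref{pieri:thm} does apply). Substituting the right-hand side of \eqref{pieri:eq} for $\mu_1$, namely
\begin{equation*}
\sum_{\substack{\nu\in P_{\mu_1}\\\lambda+\nu\in P^+}}\sum_{\eta\in W_\nu(w_\nu^{-1}\mu_1)}
U^{(S^\vee,R^\vee)}_{\nu,\eta}(\lambda+\rho_{t,S})\, V^{(S^\vee,R^\vee)}_\nu(\lambda+\rho_{t,S})\, P_{\lambda+\nu}^{(R,S)},
\end{equation*}
and then reassembling the scalar prefactors $c_\omega^{(R,S)}$, $(-1)^{\ell-1}$ and $\prod_{k} U^{(S^\vee,R^\vee)}_{\mu_k,\mu_{k+1}}(\rho_{t,S})$, one lands precisely on the claimed double expansion.

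The main thing to verify carefully—rather than a real obstacle—is the bookkeeping: that the restriction of the chain sum to \emph{dominant} weights $\mu_i$ inherited from Theorem~\ref{macsw:thm} is preserved, that the inner Pieri sum is governed by $P_{\mu_1}$ (the saturated set attached to the \emph{bottom} of the chain, not to $\omega$), and that the orbit sum over $\eta\in W_\nu(w_\nu^{-1}\mu_1)$ is the one dictated by $\mu_1$. All of these match the statement as written. No convergence or well-definedness issue arises because every step is a finite linear combination inside $\mathbb{C}[P]^W$ (after clearing the evaluation denominators, which is already justified in the proof of Theorem~\ref{pieri:thm}), so the result is simply the composition of two identities already proved, and the proof is essentially a one-line citation of Theorems~\ref{pieri:thm} and~\ref{macsw:thm} together with the normalization $P_\omega^{(R,S)}=c_\omega^{(R,S)}p_\omega^{(R,S)}$.
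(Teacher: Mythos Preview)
Your proposal is correct and follows exactly the approach the paper takes: the paper simply states that the result follows by ``combination of Theorems~\ref{pieri:thm} and~\ref{macsw:thm}'', and you have spelled out precisely that combination (including the renormalization $P_\omega^{(R,S)}=c_\omega^{(R,S)}p_\omega^{(R,S)}$ and the observation via Lemma~\ref{small-weights1:lem} that each $\mu_1$ is small so the Pieri formula applies).
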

Much less compact Pieri formulas, explicit representations, and Littlewood-Richardson type rules for the Macdonald polynomials have surfaced recently in the context of work on explicit Pieri formulas for
Macdonald's spherical functions \cite{die-ems:pieri}.
Furthermore, for the classical root systems of type $C$ (inverse) Pieri formulas expressing the products of two one-row Macdonald polynomials in terms of two-row Macdonald polynomials (and vice versa) were
presented in \cite{las:inverse}, whereas in \cite{bar:pieri-type} a start was made with the study of Pieri formulas for the nonsymmetric Macdonald polynomials associated with the root systems of type $A$.

\appendix
\section{The diagonalization of $D_\omega$: technicalities}\label{app}
In this appendix we outline the technical details of the three principal steps underlying the proof of
Theorem \ref{diagonalization:thm}.

\subsection{Step (i): Polynomiality}
Our proof of the fact that $D_\omega$ maps $ \mathbb{C}[P]^W$ into itself hinges on the following
lemmas.
\begin{lemma}\label{small-weights2:lem}
Let $\nu,\omega\in \Lambda^+$ be small with $\nu\leq\omega$ and let $\beta\in R^+$ such that $\langle \beta_* ,\nu\rangle =2$. 
Then one has that:
\begin{itemize}
\item[a)] the set  $\{ \alpha\in R^+ \mid \langle \nu ,\alpha_*\rangle =2, \langle\alpha ,\alpha\rangle = \langle\beta ,\beta\rangle \}$ is given by the orbit $W_{\nu ,\omega} (\beta)$ (where $W_{\nu,\omega}  := W_{\nu} \cap W_{\omega} $);
\item[b)]  there exists a unique root $\alpha\in W_{\nu ,\omega} (\beta)\subset R^+$ such that $\underline{\nu}:=\nu -\alpha_*^\vee $ is dominant (and thus $\underline{\nu}$ is small by Lemma \ref{small-weights1:lem}).
\end{itemize}
\end{lemma}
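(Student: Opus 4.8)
The statement is a structural fact about small weights $\nu\le\omega$ and the roots $\beta$ with $\langle\beta_*,\nu\rangle=2$, so the natural line of attack is to translate everything into the language of the subsystem $R_\nu$ (the root system of the stabilizer $W_\nu$) together with the finite set of ``active'' roots cut out by the condition $\langle\alpha_*,\nu\rangle=2$, and then reduce to an essentially rank-one or rank-two computation.  For part (a) I would argue as follows.  If $\langle\alpha_*,\nu\rangle=2$ then, since $\nu$ is small, for the dual co-root one has $\langle\nu,\alpha^\vee\rangle\le 2$; combined with $\langle\alpha_*,\nu\rangle=2$ and the relation between $\alpha_*$, $\alpha^\vee$ and $\langle\alpha,\alpha\rangle$ this pins down $\langle\nu,\alpha^\vee\rangle=2$ exactly, i.e.\ $\alpha$ ``saturates'' the smallness bound of $\nu$.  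A weight attaining $\langle\nu,\alpha^\vee\rangle=2$ behaves, relative to $\alpha$, like the quasi-minuscule weight does relative to the maximal root, and the well-known fact (Bourbaki) is that the roots $\alpha$ with $\langle\nu,\alpha^\vee\rangle=2$ and a fixed length form a single orbit under the stabilizer of $\nu$.  What has to be upgraded here is that the orbit is actually under the \emph{smaller} group $W_{\nu,\omega}=W_\nu\cap W_\omega$, not just under $W_\nu$.  This is where the hypothesis $\nu\le\omega$ enters: writing $\omega-\nu\in Q^+$ and using $\langle\nu,\alpha^\vee\rangle=2$ together with $\langle\omega,\alpha^\vee\rangle\le 2$ (smallness of $\omega$) forces $\langle\omega-\nu,\alpha^\vee\rangle\le 0$ for every such $\alpha$; since $\omega-\nu$ is a nonnegative combination of positive roots, this nonpositivity against the positive root $\alpha$ together with $\omega-\nu$ dominant is enough to conclude $\langle\omega,\alpha^\vee\rangle=2$ as well, hence $\alpha$ saturates $\omega$ too, and so the reflection $s_\alpha$ — and more generally the elements moving one such $\alpha$ to another — fix $\omega$ in addition to $\nu$.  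Thus the $W_\nu$-orbit and the $W_{\nu,\omega}$-orbit coincide, giving (a).

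For part (b) I would pass to $\underline\nu:=\nu-\alpha_*^\vee$ and ask when it is dominant.  Dominance means $\langle\underline\nu,\gamma^\vee\rangle\ge 0$ for all simple $\gamma$; since $\nu$ itself is dominant, the only coroots $\gamma^\vee$ that can violate this are those with $\langle\alpha_*^\vee,\gamma^\vee\rangle$ large enough, and because $\alpha$ saturates $\nu$ (so $\nu$ is extremal in a controlled way in the $\alpha$-direction) the failure of dominance is governed entirely by the subsystem $R_\nu$ and the relative position of $\alpha$ inside the relevant root string.  The cleanest formulation: among the roots in the orbit $W_{\nu,\omega}(\beta)$ there is a unique one that is ``highest'' with respect to $\nu$ in the sense that subtracting its contribution lands back in the dominant cone — this is the analogue of the statement that from the quasi-minuscule weight there is a unique dominant weight obtained by subtracting a root, namely $0$.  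Concretely I would either (1) invoke the orbit transitivity from (a) to reduce to showing existence-and-uniqueness for one canonical choice and then track how $s_\gamma$ for $\gamma$ simple permutes the candidates, a standard ``lowest coset representative'' argument, or (2) argue directly that the map $\alpha\mapsto$ (sign pattern of $\langle\nu-\alpha_*^\vee,\gamma^\vee\rangle$ over simple $\gamma$) is injective on the orbit and hits the all-nonnegative pattern exactly once.  Finally, smallness of $\underline\nu$ is handed to us for free: $\underline\nu\le\nu\le\omega$, so $\underline\nu$ is small by Lemma~\ref{small-weights1:lem} (it is dominant and dominated by a small weight).

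\textbf{The main obstacle.}  The genuinely delicate point is the uniqueness in part (b), together with the uniform (classification-free) justification that passing from $W_\nu$ to $W_{\nu,\omega}$ costs nothing in part (a).  Both hinge on squeezing the inequalities $\langle\nu,\alpha^\vee\rangle\le 2$, $\langle\omega,\alpha^\vee\rangle\le 2$, $\omega-\nu\in Q^+$ hard enough to force equalities; the bookkeeping of which root strings and which simple reflections can move an $\alpha$ with $\langle\nu,\alpha^\vee\rangle=2$ to another such root while preserving length is where a case check can creep in.  I expect that a clean proof is possible by working inside the rank-two subsystems spanned by $\alpha$ and a single simple root $\gamma$ (there are only finitely many rank-two types, $A_1\times A_1$, $A_2$, $B_2$, $G_2$), reducing the whole claim to an inspection in each of these — which is morally ``classification-free'' in the sense of depending only on rank-two data, the same spirit as \cite{ste:partial} — but making that reduction airtight, rather than just plausible, is the part that will take real work.
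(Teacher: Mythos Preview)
Your proposal takes a completely different route from the paper.  The paper does not give a uniform argument at all: it simply states that the lemma (together with its companion Lemma~\ref{small-weights3:lem}) was ``verified on a case by case basis through a list of the small weights for all irreducible reduced root systems $R$ (without loss of generality one may assume for this check that $S=R^\vee$)'', noting only that the \emph{existence} half of part~(b) follows from Stembridge's \cite[Corollary~2.7]{ste:partial} and that a classification-free proof along those lines ``looks promising'' but is not supplied.  So you are attempting exactly what the authors left open.

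That said, your sketch for part~(a) has a genuine gap, not merely missing details.  You write that $\omega-\nu$ is dominant and use this, together with $\omega-\nu\in Q^+$ and $\langle\omega-\nu,\alpha_*\rangle\le 0$, to force $\langle\omega,\alpha_*\rangle=2$.  But $\omega-\nu$ need \emph{not} be dominant (the difference of two dominant weights rarely is), and without some substitute the inequality $\langle\omega-\nu,\alpha_*\rangle\le 0$ alone does not give equality: an element of $Q^+$ can pair negatively with a positive root.  Worse, even granting $\langle\omega,\alpha_*\rangle=2$ for every $\alpha$ saturating $\nu$, your conclusion ``so the reflection $s_\alpha$ \ldots\ fixes $\omega$ in addition to $\nu$'' is backwards: $\langle\nu,\alpha_*\rangle=2$ means precisely that $s_\alpha$ does \emph{not} fix $\nu$, so $s_\alpha\notin W_\nu$.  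What you actually need is that the elements of $W_\nu$ carrying one saturating root to another also lie in $W_\omega$, and nothing in your argument addresses that.  These are not technicalities to be filled in later; the mechanism you propose for passing from the $W_\nu$-orbit to the $W_{\nu,\omega}$-orbit does not work as stated.

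Your outline for part~(b) is more reasonable in spirit --- reducing uniqueness to a ``lowest coset representative'' statement inside the orbit --- but since it rests on the orbit description from~(a), it inherits the problem.  Given that the authors themselves flagged a uniform proof as open, a correct write-up would either carry out the case check honestly (as the paper does) or make substantial new progress on the Stembridge-style argument; your current sketch does neither.
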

\begin{lemma}\label{small-weights3:lem}
Let $\nu,\omega\in \Lambda^+$ be small with $\nu<\omega$ and let $\beta\in R^+_{\nu}$ such that $\langle \beta_* ,\omega\rangle =2$. 
Then one has that:
\begin{itemize}
\item[a)] the set  $\{ \alpha\in R^+_{\nu} \mid \langle \omega ,\alpha_*\rangle =2, \langle\alpha ,\alpha\rangle = \langle\beta ,\beta\rangle \}$ is given by the orbit $W_{\nu,\omega} (\beta)$;
\item[b)]  there exists a unique root $\alpha\in W_{\nu,\omega}   (\beta)\subset R^+_\nu$ such that $\overline{\nu} :=\nu +\alpha_*^\vee $ is dominant (and moreover $\overline{\nu} \leq \omega$ so $\overline{\nu}$ is small by Lemma \ref{small-weights1:lem}).
\end{itemize}
\end{lemma}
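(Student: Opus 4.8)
The plan is to work inside the root subsystem $R_\nu$. Since $\alpha_*$ is a positive multiple of $\alpha$, one has $R_\nu=\{\alpha\in R\mid\langle\alpha_*,\nu\rangle=0\}$, the bijection $\alpha\mapsto\alpha_*$ restricts to an admissible pair $(R_\nu,S_\nu)$ with $S_\nu=\{\delta\in S\mid\langle\delta,\nu\rangle=0\}$, the group $W_{\nu,\omega}=W_\nu\cap W_\omega$ is exactly the stabilizer of $\omega$ in $W_\nu$ and hence the Weyl group of $R_\nu\cap R_\omega$, and the orthogonal projection $\omega'$ of $\omega$ onto the span of $R_\nu$ is again a small dominant weight, now of $S_\nu^\vee$, with $\langle\alpha_*,\omega'\rangle=\langle\alpha_*,\omega\rangle$ for every $\alpha\in R_\nu$. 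Thus part a) becomes a statement purely about the admissible pair $(R_\nu,S_\nu)$ and the small weight $\omega'$, while in part b) the orbit of candidate weights is governed by $R_\nu$ but dominance is still to be tested against all of $S^\vee$; after this reduction one may moreover assume, decomposing $R_\nu$ into irreducible components, that the relevant facts are checked component by component.

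For part a) the inclusion $W_{\nu,\omega}(\beta)\subseteq\{\alpha\in R^+_\nu\mid\langle\alpha_*,\omega\rangle=2,\ \langle\alpha,\alpha\rangle=\langle\beta,\beta\rangle\}$ is immediate: for $w\in W_{\nu,\omega}$ the root $w\beta$ again lies in $R_\nu$ and has the length of $\beta$, while $\langle(w\beta)_*,\omega\rangle=\langle w\beta_*,\omega\rangle=\langle\beta_*,w^{-1}\omega\rangle=\langle\beta_*,\omega\rangle=2$ because the map $\alpha\mapsto\alpha_*$ commutes with $W$ (as $u_\alpha$ is $W$-invariant) and $w^{-1}\omega=\omega$; moreover $w\beta$ is automatically positive, since $\langle\gamma_*,\omega\rangle\geq 0$ for every $\gamma\in R^+$ by the dominance of $\omega$, so a root paired to $2$ with $\omega$ cannot be negative. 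The reverse inclusion is the first substantive point.

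For part b) the key observation is that, since $W_{\nu,\omega}$ fixes $\nu$, the set $\{\nu+\alpha_*^\vee\mid\alpha\in W_{\nu,\omega}(\beta)\}$ coincides with the single $W_{\nu,\omega}$-orbit of $\nu+\beta_*^\vee$ (again because passing to $\alpha_*$ and to coroots commutes with $W$). This orbit meets the closed fundamental chamber of $W_{\nu,\omega}$ in exactly one point, and since $\Lambda^+$ is contained in that chamber there is at most one $\alpha$ for which $\overline{\nu}=\nu+\alpha_*^\vee$ is dominant; thus uniqueness follows once existence is shown. It remains to check that the $W_{\nu,\omega}$-dominant representative $\delta=\nu+\alpha_*^\vee$ of the orbit is in fact dominant for $W$ and satisfies $\delta\leq\omega$. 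For dominance, $\langle\delta,\gamma\rangle\geq 0$ already follows from $W_{\nu,\omega}$-dominance whenever a simple root $\gamma$ of $S^\vee$ is orthogonal to both $\nu$ and $\omega$, so only the finitely many simple roots $\gamma$ with $\langle\nu,\gamma\rangle>0$ or $\langle\omega,\gamma\rangle>0$ have to be inspected, using a lower bound on $\langle\alpha_*^\vee,\gamma\rangle$ in terms of the root lengths together with $\langle\omega,\gamma\rangle\leq 2$. For $\delta\leq\omega$ one writes $\omega-\overline{\nu}=(\omega-\nu)-\alpha_*^\vee$ with $\omega-\nu\in Q^+$ and $\alpha_*^\vee$ a positive coroot, and checks that the simple-root support of $\alpha_*^\vee$ occurs in $\omega-\nu$ with at least the same multiplicities; this is precisely where the hypothesis $\langle\alpha_*,\omega\rangle=2$ enters.

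I expect the genuinely combinatorial input---the reverse inclusion in a), together with the assertions that $\delta$ is $W$-dominant and that $\overline{\nu}\leq\omega$---to be the main obstacle: one must rule out that roots of the prescribed length paired to $2$ with $\omega$ occur in more than one irreducible component of $R_\nu$, and show that within one component they do form a single orbit under the Weyl group of $R_\nu\cap R_\omega$. Absent a uniform classification-free proof (which, as for the second part of Lemma \ref{small-weights1:lem}, would be of independent interest, cf. \cite{ste:partial}), I would settle these points by a case-by-case inspection based on the description of the small weights and the list of small fundamental weights in Table \ref{smallweights:tab}, exactly as in the proof of the second part of Lemma \ref{small-weights1:lem}. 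The remaining steps---the reduction to $R_\nu$, the easy inclusion, and the uniqueness in b)---are formal.
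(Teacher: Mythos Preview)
Your proposal is correct and essentially matches the paper's approach: the paper simply records that the lemma was verified on a case-by-case basis through a list of the small weights for all irreducible reduced root systems (taking $S=R^\vee$ without loss of generality), without further detail. Your formal reductions---the easy inclusion in a), the uniqueness in b) via the single intersection of a $W_{\nu,\omega}$-orbit with the closed fundamental chamber, and the passage to $R_\nu$---are correct and provide useful scaffolding, but the substantive combinatorial points are, as you yourself acknowledge, still handled by the same case-by-case inspection.
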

We verified these lemmas on a case by case basis through a list of the small weights for all irreducible reduced root systems $R$ (without loss of generality one may assume for this check that $S=R^\vee$). Nevertheless, in part b) of Lemma \ref{small-weights2:lem} the existence of a positive root
$\alpha$ such that $\nu -\alpha_*^\vee $ is dominant  is already immediate from \cite[Corollary 2.7]{ste:partial}  and it looks in fact promising to try to pursue a classification-free proof of these lemmas by building on the analysis  in {\em loc. cit.}

\begin{proposition}\label{step1:prp}
For $\omega\in \Lambda^+$ small and  any $\lambda\in P^+$,  one has that $$D_\omega m_\lambda \in \mathbb{C}[P]^W.$$
\end{proposition}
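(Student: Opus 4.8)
The plan is to show that $D_\omega m_\lambda$ is a $W$-invariant formal combination of exponentials $e^\mu$ with $\mu\in P$ by proving that the only possible poles—coming from the factors $v_\alpha(z)=t_\alpha^{-1/2}(1-t_\alpha z)/(1-z)$ in $V_\nu$ and $U_{\nu,\eta}$—all cancel after summing over the orbit structure built into the definition \eqref{gmoa}. First I would record the action explicitly: $D_\omega m_\lambda=\sum_{\nu\in\Lambda_\omega}\sum_{\eta}U_{\nu,\eta}V_\nu\,T_\nu m_\lambda$, where $T_\nu m_\lambda=\sum_{\kappa\in W\lambda}q^{\langle\kappa,\nu\rangle}e^\kappa$, so $D_\omega m_\lambda$ is manifestly a well-defined element of the quotient field of $\mathbb{C}[P]$; the content is to see it has no poles and is $W$-invariant. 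Since each $v_\alpha$ has a simple pole only along the hyperplane $e^\alpha=1$, the potential singularities of $V_\nu$ (resp. $U_{\nu,\eta}$) lie along $e^\alpha=1$ for $\alpha\in R$ with $\langle\alpha_*,\nu\rangle\in\{1,2\}$ (resp. $\alpha\in R_\nu$ with $\langle\alpha_*,\eta\rangle\in\{1,2\}$); when $\langle\alpha_*,\nu\rangle=2$ the shifted argument $q_\alpha e^\alpha$ contributes a second, shifted pole.

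The key steps, in order, are: (1) fix a positive root $\beta$ and a hyperplane $H_\beta=\{e^\beta=1\}$, and identify exactly which pairs $(\nu,\eta)$ in the double sum produce a pole along $H_\beta$; (2) pair up such terms so that the residues cancel. The natural pairing uses Lemmas \ref{small-weights2:lem} and \ref{small-weights3:lem}: a term with $\nu$ having $\langle\beta_*,\nu\rangle=2$ (a pole of $V_\nu$) should be matched with the term indexed by the dominant weight $\underline{\nu}=\nu-\alpha_*^\vee\leq\nu$ of Lemma \ref{small-weights2:lem}b (for the appropriate $\alpha\in W_{\nu,\omega}(\beta)$), where the corresponding pole now sits inside the $U_{\underline\nu,\eta}$ factor because $\beta\in R_{\underline\nu}$; symmetrically, a pole of $U_{\nu,\eta}$ with $\langle\beta_*,\omega\rangle=2$ is matched via Lemma \ref{small-weights3:lem}b with $\overline\nu=\nu+\alpha_*^\vee$. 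One then shows that along $H_\beta$ the $s_\beta$-reflection (acting on $e^\kappa$ in $T_\nu m_\lambda$ and sending $\nu\leftrightarrow s_\beta\nu$) exchanges the two families of residues with an opposite sign, so the sum is regular. Concretely, writing $e^\beta=1+\varepsilon$ near $H_\beta$, one computes the leading $\varepsilon^{-1}$ (and, in the $\langle\beta_*,\nu\rangle=2$ case, also the $\varepsilon^0$ after the shifted pole is accounted for) coefficients of the two paired summands and checks they are negatives of each other, using $v_\beta(1/z)v_\beta(z)=1$ at the relevant evaluation and the fact that the non-$\beta$ factors of $V_\nu$ and $U_{\underline\nu,\eta}$ glue into the same product (this is where part a) of the two lemmas, giving $W_{\nu,\omega}(\beta)$ as a full orbit, is used to match the residual factors correctly). (3) Having cancelled all poles, $D_\omega m_\lambda\in\mathbb{C}[P]$; its $W$-invariance is then automatic either by running the same $s_\beta$-symmetry argument off the singular locus, or—more cheaply—by noting that $\mathbb{C}[P]\subset\mathbb{C}[P]^W\oplus(\text{complement})$ and that $D_\omega$ commutes with the $W$-action on the generic (pole-free) locus because the index set $\bigcup_\nu\{\nu\}\times W_\nu(w_\nu^{-1}\omega)$ and the weights $V_\nu,U_{\nu,\eta}$ are permuted equivariantly by $W$.

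The main obstacle I expect is step (2): organizing the residue bookkeeping so that the cancellation is transparent rather than a brute-force expansion. The subtlety is that a single hyperplane $H_\beta$ can receive pole contributions simultaneously from a $V_\nu$ factor of one summand and from a $U_{\nu',\eta'}$ factor of several others (and, for double poles when $\langle\beta_*,\nu\rangle=2$, one must track both the $e^\beta=1$ and the $q_\beta e^\beta=1$ singularities), so the pairing $\nu\leftrightarrow\underline\nu$ must be shown to be a bijection on exactly the set of offending terms and to respect the multiplicities $W_{\nu,\omega}(\beta)$. Uniqueness of the dominant representative in Lemmas \ref{small-weights2:lem}b and \ref{small-weights3:lem}b is precisely what makes this pairing well-defined, and verifying that the surviving scalar factors on the two sides agree (up to the sign produced by $s_\beta$) is the computational heart of the argument; everything else is formal.
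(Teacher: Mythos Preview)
Your overall strategy---view $D_\omega m_\lambda$ as a $W$-invariant rational expression and kill the apparent poles by a residue cancellation, with the $\nu\leftrightarrow\underline{\nu}$ and $\nu\leftrightarrow\overline{\nu}$ pairings of Lemmas~\ref{small-weights2:lem} and~\ref{small-weights3:lem} doing the work---is exactly the paper's. But your ordering of the two ingredients is inverted, and this creates a genuine gap. The $W$-invariance in the quotient field is \emph{not} a consequence of regularity to be deduced in step~(3); it is an immediate term-by-term check (acting by $w\in W$ sends the $(\nu,\eta)$ summand to the $(w\nu,w\eta)$ summand), and the paper uses it up front in two ways: (a) the residues along the reflection hyperplanes $\{e^\beta=1\}$ (i.e.\ $\langle x,\beta_*\rangle=0$) cancel automatically because $s_\beta$ fixes that hyperplane pointwise and the total expression is $s_\beta$-invariant; (b) for the remaining poles one may restrict attention to terms with $\nu$ dominant and $\eta=\omega$. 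Your step~(2) aims the lemma-based pairing at $H_\beta=\{e^\beta=1\}$, but that is precisely the locus already handled by~(a); moreover your pairing there only treats $\langle\beta_*,\nu\rangle=2$ and says nothing about the many terms with $\langle\beta_*,\nu\rangle=1$, which also have simple poles along $H_\beta$.

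The hyperplanes where the $\nu\leftrightarrow\underline{\nu}$ pairing is actually needed are the \emph{shifted} ones $1+\langle x,\beta_*\rangle=0$ (i.e.\ $q_\beta e^\beta=1$, equivalently $q_\beta^{-1}e^{-\beta}=1$), coming from $v_\beta(q_\beta e^\beta)$ in $V_\nu$ when $\langle\beta_*,\nu\rangle=2$ and from $v_\beta(q_\beta^{-1}e^{-\beta})$ in $U_{\nu,\omega}$ when $\langle\beta_*,\omega\rangle=2$. These are not fixed by any reflection, so symmetry alone gives nothing. The paper's key matching identity there is that on this shifted hyperplane $x+\underline{\nu}=r_\alpha(x+\nu)$, hence $m_\lambda(x+\nu)=m_\lambda(x+\underline{\nu})$; the equality (up to sign) of the remaining rational factors is then a residue computation carried out case by case. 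Two specific claims in your sketch are also off: the identity $v_\beta(1/z)v_\beta(z)=1$ is false, and the pairing $\nu\leftrightarrow\underline{\nu}$ is \emph{not} implemented by $s_\beta$ (in general $\underline{\nu}=\nu-\alpha_*^\vee\neq s_\beta\nu$). Reorganizing as above---$W$-invariance first, then Weyl symmetry for $\{e^\beta=1\}$, then the lemma pairing for the shifted poles---is what makes the argument go through.
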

\begin{proof}
Clearly $D_\omega m_\lambda$ constitutes a Weyl-group invariant element in the quotient field
of $\mathbb{C}[P]$. To demonstrate that this element lies in fact in the group algebra itself, we will check that $(D_\omega m_\lambda) (x)$ is regular as a function of $x$ (in the complexification of $E$).  Indeed, in the terms of the corresponding evaluated expression
$$
(D_\omega m_\lambda) (x)=\sum_{\nu\in\Lambda_\omega}\sum_{\eta\in W_\nu(w_\nu^{-1}\omega)} U_{\nu,\eta} (x)V_\nu (x) m_\lambda (x+\nu ) 
$$
generically simple poles appear at hyperplanes congruent to $\langle x, \beta_* \rangle =0$ or  $1+\langle x, \beta_* \rangle = 0$ for some $\beta \in R$ 
(due to the zero in the denominator of $v_\beta (z)$ at $z=1$). The Weyl-group symmetry ensures that residues of the poles congruent to $\langle x, \beta_* \rangle =0$ cancel out in $(D_\omega m_\lambda) (x)$ and that, moreover,  for the poles  congruent to $1+\langle x, \beta_* \rangle =0$ it is sufficient to check the vanishing of the residues originating from terms with $\nu$  dominant and $\eta=\omega$. For such $\nu, \eta$ the relevant poles in the corresponding term $U_{\nu,\omega} (x)V_\nu (x) m_\lambda (x+\nu )$ arise (i) in $V_\nu (x)$ at 
$1+\langle x, \beta_* \rangle =0$ with $\beta \in R^+$ such that $\langle \beta_* ,\nu\rangle =2$
and (ii) in $U_{\nu,\omega} (x)$ at $1+\langle x, \beta_* \rangle =0$ with $\beta \in R^+_\nu$ such that $\langle \beta_* ,\omega \rangle =2$. (In particular, type (i) poles only occur when $\nu\neq 0$ and type (ii) poles only occur when $\nu <\omega$.) We will now show that in $(D_\omega m_\lambda) (x)$ residues of the type (i) cancel pairwise against residues of the type (ii). Indeed, exploiting the invariance of the term in question with respect to the stabilizer $W_{\nu ,\omega}$ we restrict attention to the type (i) pole(s) corresponding the $\alpha\in R^+$ with $\underline{\nu}:=\nu-\alpha_*^\vee$ dominant (cf. Lemma \ref{small-weights2:lem}) and to the type (ii) pole(s)
corresponding to the $\alpha \in R^+_\nu$ with  $\overline{\nu} :=\nu +\alpha_*^\vee \leq \omega$ dominant (cf. Lemma \ref{small-weights3:lem}). 
The type (i) residue in $U_{\nu,\omega} (x)V_\nu (x) m_\lambda (x+\nu )$ at
$1+\langle x, \alpha_* \rangle =0$ cancels against the corresponding type (ii) residue
in $U_{\underline{\nu},\omega} (x)V_{\underline{\nu}} (x) m_\lambda (x+\underline{\nu} )$, because
$$m_\lambda (x+\nu )\mid_{\langle x, \alpha_* \rangle =-1} = m_\lambda (x+\underline{\nu}) \mid_{\langle x, \alpha_* \rangle =-1}$$
by the Weyl-group invariance and furthermore
$$
\text{Res}\left[ 
\prod_{\substack{\beta\in R^+\\ \langle\beta_*,\nu\rangle>0}} v_\beta(e^\beta)
\prod_{\substack{\beta\in R^+\\ \langle\beta_*,\nu\rangle=2}} v_\beta(q_\beta e^\beta) 
\prod_{\substack{\beta\in R_\nu^+ \\ \langle\beta_*,\omega\rangle>0}} v_\beta(e^\beta)
\prod_{\substack{\beta\in R_\nu^+ \\ \langle\beta_*,\omega\rangle=2}} v_\beta\bigl(q_\beta^{-1} e^{-\beta}\bigr) 
\right]_{\langle x, \alpha_* \rangle =-1}
$$
equals
$$
-\text{Res}\left[ 
\prod_{\substack{\beta\in R^+\\ \langle\beta_*,\underline{\nu}\rangle>0}} v_\beta(e^\beta)
\prod_{\substack{\beta\in R^+\\ \langle\beta_*,\underline{\nu}\rangle=2}} v_\beta(q_\beta e^\beta) 
\prod_{\substack{\beta\in R_{\underline{\nu}}^+ \\ \langle\beta_*,\omega\rangle>0}} v_\beta(e^\beta)
\prod_{\substack{\beta\in R_{\underline{\nu}}^+ \\ \langle\beta_*,\omega \rangle=2}} v_\beta\bigl(q_\beta^{-1} e^{-\beta}\bigr) 
\right]_{\langle x, \alpha_* \rangle =-1} 
$$
(where the argument $x$ was suppressed for typographical reasons). To infer the above equality of the monomial symmetric function evaluations it is enough to observe that 
$x+\underline{\nu}=r_\alpha (x+\nu)$  when $x$ lies on the hyperplane $1+\langle x, \alpha_* \rangle =0$ (where $r_\alpha$ denotes the orthogonal reflection in the hyperplane $\langle x, \alpha \rangle =0$). Furthermore, to verify the corresponding relation between the residues it is convenient to divide both products and cancel common factors in the numerator and denominator upon exploiting the hyperplane relation  $1+\langle x, \alpha_* \rangle =0$. It thus follows after  straightforward but somewhat tedious
computations (performed on a case by case basis for all admissible pairs $(R,S)$ and all nonzero small weights
$\nu,\omega$ with $\nu\leq\omega$)
that  on the hyperplane in question the quotient collapses to $-1$  (as an equality between rational functions).
(Notice in this connection that $\text{Res}\, v_\alpha (q_\alpha z)\mid_{z=1/q_\alpha}=-\text{Res}\, v_\alpha (q_\alpha^{-1} z^{-1})\mid_{z=1/q_\alpha}$.)
Reversely,  the type (ii) residue in $U_{\nu,\omega} (x)V_\nu (x) m_\lambda (x+\nu )$ at
$1+\langle x, \alpha_* \rangle =0$ 
cancels against the corresponding type (i) residue
in $U_{\overline{\nu},\omega} (x)V_{\overline{\nu}} (x) m_\lambda (x+\overline{\nu} )$
(by the same residue formula with $\nu$ and $\underline{\nu}$ being replaced by $\overline{\nu}$ and $\nu$, respectively).
\end{proof}

\subsection{Step (ii): Triangularity and eigenvalues}

\begin{proposition}\label{step2:prp}
For $\omega\in \Lambda^+$ small and  any $\lambda\in P^+$,  one has that $$D_\omega m_\lambda = \sum_{\mu\in P^+,\, \mu\leq \lambda} b_{\lambda\mu}(q,t)m_\mu ,$$ with the leading expansion coefficient  (i.e. the corresponding eigenvalue) given by
$$b_{\lambda\lambda}(q,t)=  E_\omega^S (\lambda+\rho_{t,S}) .$$
\end{proposition}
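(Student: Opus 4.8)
The plan is to run the triangularity argument of Macdonald \cite{mac:orthogonal}, modified to accommodate the additional inner summation over $\eta$ in $D_\omega$. The first ingredient is a convenient expansion of the rational coefficients. Each factor $v_\alpha(z)=t_\alpha^{-1/2}(1-t_\alpha z)/(1-z)$ entering $V_\nu$ or $U_{\nu,\eta}$ is developed as a formal power series in the negative simple roots $e^{-\alpha_1},\dots,e^{-\alpha_n}$: for $\alpha\in R^+$ one writes $1/(1-e^\alpha)=-\sum_{k\ge1}e^{-k\alpha}$, while for $\alpha\in R^-$ the factor is already of this kind. A direct computation then shows, modulo the ideal generated by $e^{-\alpha_1},\dots,e^{-\alpha_n}$, that the leading (constant) term of $v_\alpha(e^\alpha)$ and of $v_\alpha(q_\alpha e^\alpha)$ is $t_\alpha^{1/2}$ for $\alpha\in R^+$ and $t_\alpha^{-1/2}$ for $\alpha\in R^-$, while the leading term of $v_\alpha(q_\alpha^{-1}e^{-\alpha})$ is $t_\alpha^{-1/2}$ for $\alpha\in R^+$ and $t_\alpha^{1/2}$ for $\alpha\in R^-$. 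Multiplying out gives $V_\nu=q^{\langle\rho_{t,S},\nu\rangle}(1+\cdots)$ — the exponent $q^{\langle\rho_{t,S},\nu\rangle}=\prod_{\alpha\in R^+}t_\alpha^{\langle\alpha_*,\nu\rangle/2}$ resulting from a short bookkeeping of the leading terms over $R^+$ — and $U_{\nu,\eta}=\tilde\epsilon_{\nu,\eta}(1+\cdots)$, with $\tilde\epsilon_{\nu,\eta}$ an explicit monomial in the $t_\alpha^{\pm1/2}$ and the omitted tails again lying in the said ideal.

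The triangularity is then immediate. Since $T_\nu m_\lambda=\sum_{\kappa\in W\lambda}q^{\langle\kappa,\nu\rangle}e^\kappa$, every monomial occurring in $U_{\nu,\eta}V_\nu T_\nu m_\lambda$ is of the form $e^{\kappa-\xi}$ with $\kappa\in W\lambda$ and $\xi\in Q^+$, and $\kappa\le\lambda$ forces $\kappa-\xi\le\lambda$; as $\Lambda_\omega$ is finite, summing over $\nu$ and $\eta$ does not spoil this, so every weight occurring in $D_\omega m_\lambda$ is $\le\lambda$. Combined with Step (i) (Proposition \ref{step1:prp}), which guarantees $D_\omega m_\lambda\in\mathbb{C}[P]^W$, this yields $D_\omega m_\lambda=\sum_{\mu\in P^+,\,\mu\le\lambda}b_{\lambda\mu}(q,t)m_\mu$.

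The leading coefficient $b_{\lambda\lambda}(q,t)$ is the coefficient of $e^\lambda$; since $e^{\kappa-\xi}=e^\lambda$ with $\kappa\le\lambda$ and $\xi\ge0$ is only possible for $\kappa=\lambda$, $\xi=0$, solely the constant terms of the coefficient functions contribute, and, writing the constant term of $U_{\nu,\eta}V_\nu$ as $q^{\langle\rho_{t,S},\nu\rangle}\tilde\epsilon_{\nu,\eta}$,
\[
b_{\lambda\lambda}(q,t)=\sum_{\nu\in\Lambda_\omega}q^{\langle\nu,\lambda+\rho_{t,S}\rangle}\sum_{\eta\in W_\nu(w_\nu^{-1}\omega)}\tilde\epsilon_{\nu,\eta}.
\]
It remains to regroup $\Lambda_\omega=\bigsqcup_{\mu\in\Lambda^+,\,\mu\le\omega}W\mu$ and to show that for $\nu\in W\mu$ the inner sum $\sum_{\eta}\tilde\epsilon_{\nu,\eta}$ depends only on the orbit and equals the coefficient $\epsilon_{\omega,\mu}$ attached to the dominant representative $\mu$; granting this, $b_{\lambda\lambda}=\sum_{\mu\le\omega}\epsilon_{\omega,\mu}\sum_{\nu\in W\mu}q^{\langle\nu,\lambda+\rho_{t,S}\rangle}=\sum_{\mu\le\omega}\epsilon_{\omega,\mu}m_\mu(\lambda+\rho_{t,S})=E_\omega^S(\lambda+\rho_{t,S})$.

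I expect the main obstacle to be exactly this last identification. One has to evaluate the monomials $\tilde\epsilon_{\nu,\eta}$ explicitly — in effect, counting the roots $\alpha\in R_\nu$ with $\langle\alpha_*,\eta\rangle$ equal to $1$ or to $2$, and tracking the cancellations between the two products defining $U_{\nu,\eta}$ on the roots with $\langle\alpha_*,\eta\rangle=2$, where the leading term of $v_\alpha(e^\alpha)$ meets that of $v_\alpha(q_\alpha^{-1}e^{-\alpha})$ — and then check that summing over $\eta\in W_\nu(w_\nu^{-1}\omega)$ reproduces $\epsilon_{\omega,\mu}$. The $W$-equivariance $U_{w\nu,w\eta}=w(U_{\nu,\eta})$ (a consequence of $t_{w\alpha}=t_\alpha$ and $(w\alpha)_*=w\alpha_*$) together with $w_\nu\bigl(W_\nu(w_\nu^{-1}\omega)\bigr)=W_\mu\omega$ reduces the matter to the dominant case $\nu=\mu$. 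As in the verification of Proposition \ref{step1:prp}, I would settle the remaining bookkeeping by a (finite) case-by-case inspection of the small weights for all admissible pairs $(R,S)$ of irreducible reduced root systems.
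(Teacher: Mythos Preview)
Your approach is essentially the paper's own. The paper also proves Proposition~\ref{step2:prp} by computing the leading behaviour of the coefficients $V_\nu$ and $U_{\nu,\eta}$, only it phrases this analytically (``asymptotics deep in the anti-dominant Weyl chamber'', using $\lim_{z\to 0}v_\alpha(z)=t_\alpha^{-1/2}$ and $\lim_{z\to\infty}v_\alpha(z)=t_\alpha^{1/2}$) rather than as a formal expansion in $e^{-\alpha_1},\dots,e^{-\alpha_n}$; the two formulations are equivalent, and your triangularity argument via $e^{\kappa-\xi}$ with $\kappa\in W\lambda$, $\xi\in Q^+$ is precisely what this asymptotic picture encodes.

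The one substantive difference is that you propose to close the identification $\sum_\eta\tilde\epsilon_{\nu,\eta}=\epsilon_{\omega,\mu}$ by a case-by-case inspection of small weights. This is unnecessary, and the paper does not do it here (the case analyses in the appendix are confined to Step~(i)). Your own observations already suffice: the $W$-equivariance $U_{w\nu,w\eta}=w(U_{\nu,\eta})$ reduces the computation to the dominant representative $\nu=\mu$, and there the leading term of $U_{\mu,\eta}$ is a \emph{uniform} monomial in the $t_\alpha$ obtained from the limits of $v_\alpha$, depending only on the signs of $\langle\alpha_*,\eta\rangle$ for $\alpha\in R_\mu^+$ (the factors with $|\langle\alpha_*,\eta\rangle|=2$ cancel, exactly as you note). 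Summing over $\eta\in W_\mu\omega$ then reproduces $\epsilon_{\omega,\mu}$ directly, with no root-system-by-root-system check required. So you may drop the final case-by-case step; the paper simply asserts that the asymptotics ``readily'' give $E_\omega^S(\lambda+\rho_{t,S})$, and indeed the bookkeeping is uniform.
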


\begin{proof}
By Proposition \ref{step1:prp}, $D_\omega m_\lambda$ expands as a finite linear combination of symmetric monomials $m_\mu$, $\mu\in P^+$.  Since $\lim_{z\to 0} v_\alpha (z)=t_\alpha^{-1/2}$ and 
$\lim_{z\to \infty} v_\alpha (z)=t_\alpha^{1/2}$, it readily follows that the asymptotics deep in the
anti-dominant Weyl chamber is of the form:
$(D_\omega m_\lambda) (x)= E_\omega^S (\lambda+\rho_{t,S}) q^{\langle \lambda ,x\rangle} (1+o(1))$ as $x$ grows away from the walls in such a way that  $\langle x, \alpha\rangle \to -\infty$ for all $\alpha \in R^+$. Hence, in the expansion of $D_\omega m_\lambda$  only monomials 
 $m_\mu$, $\mu\in P^+$ with $\mu \leq \lambda$ appear, and the coefficient of the leading monomial $m_\lambda$ is given by $E_\omega^S (\lambda+\rho_{t,S}) $.
\end{proof}

\subsection{Step (iii): Symmetry}

\begin{proposition}\label{step3:prp}
For $\omega\in \Lambda^+$ small and any $\lambda,\mu\in P^+$, one has that
$$\langle D_\omega m_\lambda ,m_\mu\rangle_\Delta=\langle  m_\lambda , D_{\omega} m_\mu\rangle_\Delta .$$
\end{proposition}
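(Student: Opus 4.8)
The plan is to establish the self-adjointness of $D_\omega$ with respect to $\langle\cdot,\cdot\rangle_\Delta$ by the same device Macdonald uses for his minuscule and quasi-minuscule operators in \cite{mac:orthogonal}: pass from the weight-function formulation $\langle f,g\rangle_\Delta = |W|^{-1}\int f\overline{g}\,\Delta$ to the equivalent ``constant term with respect to a Gaussian'' pairing in which the translation operators $T_\nu$ act nicely, and then check that the coefficient functions $V_\nu U_{\nu,\eta}$ satisfy the requisite cocycle-type identity under the transformation $\Delta\mapsto T_\nu\Delta$. Concretely, first I would reduce to the ``naive'' pairing $\langle f,g\rangle_0' := \int f\,\overline{g}\,\Delta$ (no $|W|^{-1}$, since $D_\omega$ commutes with $W$ and $\Delta$ is $W$-invariant, so the averaging is harmless) and recall the elementary fact that $\int (T_\nu h) = \int h$ for the constant-term functional, together with $\overline{T_\nu h}=T_{-\nu}\overline{h}$.

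The key computational step: for a single term $U_{\nu,\eta}V_\nu T_\nu$ one has
\begin{equation*}
\int (U_{\nu,\eta}V_\nu T_\nu f)\,\overline{g}\,\Delta
= \int f\; T_{-\nu}\!\bigl(\overline{g}\,\Delta\, U_{\nu,\eta}V_\nu\bigr)
= \int f\; \overline{\bigl(U_{-\nu,?}V_{-\nu}\, T_{-\nu}g\bigr)}\;\Delta\cdot\frac{T_{-\nu}\Delta}{\Delta}\cdot\frac{T_{-\nu}(U_{\nu,\eta}V_\nu)}{\overline{(\cdots)}},
\end{equation*}
so that after shifting $T_{-\nu}$ through $\overline{g}$ and $\Delta$, everything hinges on the identity
\begin{equation*}
T_{-\nu}\bigl(V_\nu U_{\nu,\eta}\bigr)\,\frac{T_{-\nu}\Delta}{\Delta}
= \overline{V_\nu U_{\nu,\eta'}}\quad\text{(summed appropriately over }\eta'\text{)}
\end{equation*}
relating the coefficient at $\nu$ to the complex conjugate of the coefficient at $-\nu$. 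The ratio $T_{-\nu}\Delta/\Delta = \prod_{\alpha\in R}\prod_{k}\frac{1-q_\alpha^{-\langle\alpha_*,\nu\rangle}\cdots}{1-\cdots}$ telescopes, because $\langle\alpha_*,\nu\rangle\in\{-2,-1,0,1,2\}$ (as $\nu\in W\mu$ with $\mu\le\omega$ small, so $\langle\beta^\vee,\nu\rangle\le 2$ for $\beta$ dominant forces all pairings into this range); this is exactly the smallness hypothesis doing its work. Matching the resulting finite products of $(1-q_\alpha^{j}t_\alpha e^{\pm\alpha})/(1-q_\alpha^j e^{\pm\alpha})$ factors against $V_{-\nu}$ and $U_{-\nu,\eta'}$ — recalling $v_\alpha(z)v_\alpha(q_\alpha z)$ absorbs exactly the $\langle\alpha_*,\nu\rangle=2$ contributions and that $v_\alpha(z)$ and $v_\alpha(q_\alpha^{-1}e^{-\alpha})$ are swapped under $\nu\leftrightarrow-\nu$ — is the substance of the proof.

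The main obstacle I expect is precisely this last bookkeeping: verifying that $\frac{T_{-\nu}\Delta}{\Delta}\,T_{-\nu}(V_\nu U_{\nu,\eta})$ reassembles, after summing $\eta$ over $W_\nu(w_\nu^{-1}\omega)$, into $\overline{V_{-\nu}}$ times a sum of $\overline{U_{-\nu,\eta'}}$ over the correspondingly shifted orbit — the $U$-factors are supported on the sub-root-system $R_\nu=R_{-\nu}$, and the $\eta$-orbit at $\nu$ must be put in bijection with the $\eta'$-orbit at $-\nu$ under $w\mapsto$ (a suitable longest-element conjugate). As with Steps (i) and (ii), I anticipate this final matching is checked cleanly for the highest orbit $\nu\in W\omega$ by the double affine Hecke algebra computation of \cite[Sec. 5.3]{mac:affine}, and for the remaining (lower) orbits $\nu\in\Lambda_\omega\setminus W\omega$ is reduced — using Lemmas \ref{small-weights2:lem} and \ref{small-weights3:lem} to organise the poles/factors — to a case-by-case check over the admissible pairs $(R,S)$ and nonzero small weights. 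The upshot is that $\langle D_\omega m_\lambda,m_\mu\rangle_\Delta = \langle m_\lambda, D_\omega m_\mu\rangle_\Delta$, completing Step (iii).
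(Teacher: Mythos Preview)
Your overall instinct is right---shift the translation operators through the constant-term functional and match via a cocycle relation involving $\Delta$---but you are making Step~(iii) much harder than it is and importing machinery (Lemmas~\ref{small-weights2:lem}, \ref{small-weights3:lem}, case-by-case checks, double affine Hecke input) that the argument does not need. In the paper this is actually the \emph{cleanest} of the three steps.

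Three concrete points where your outline diverges from what is really required:

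\textbf{(a) Reduce first to $t_\alpha=q_\alpha^k$.} You should begin by specializing to $t_\alpha$ an integral power $\geq 2$ of $q_\alpha$, so that every term of $(D_\omega m_\lambda)\,\Delta$ genuinely lies in $\mathbb{C}[P]$ and the formal manipulations with $\int$ are literally valid. The general case then follows because both sides of the eigenvalue equation \eqref{ev:eq} are rational in $t_\alpha$.

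\textbf{(b) The matching is term-by-term in $\nu$; no $\nu\leftrightarrow -\nu$ pairing is needed.} The identities that do the work are
\[
\overline{V_\nu}=V_{-\nu},\qquad \overline{U_{\nu,\eta}}=U_{\nu,-\eta},\qquad
T_\nu\bigl(V_{-\nu}\,\Delta\bigr)=V_\nu\,\Delta,\qquad
T_\nu\,U_{\nu,-\eta}=U_{\nu,-\eta}.
\]
The third is exactly the telescoping you describe (and holds uniformly, without casework, because $|\langle\alpha_*,\nu\rangle|\le 2$); the fourth is trivial since $U_{\nu,\eta}$ is built from $e^\alpha$ with $\alpha\in R_\nu$, so $T_\nu e^\alpha=e^\alpha$. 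Using $\int(T_\nu f)\overline{g}=\int f\,\overline{T_\nu g}$ and these identities, the $\nu$-term of $\langle D_\omega m_\lambda,m_\mu\rangle_\Delta$ becomes the $\nu$-term of $\langle m_\lambda,D_\omega m_\mu\rangle_\Delta$ after summing over $\eta$---there is no need to compare with a term at $-\nu$.

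\textbf{(c) The $\eta$-bookkeeping is a one-line reparametrization.} Once you are left with $\sum_{\eta} U_{\nu,-\eta}$, the longest element $w_o$ of $W_\nu$ sends $-\eta$ back into the orbit $W_\nu(w_\nu^{-1}\omega)$, so
\[
\sum_{\eta\in W_\nu(w_\nu^{-1}\omega)} U_{\nu,-\eta}
=\sum_{\eta\in W_\nu(w_\nu^{-1}\omega)} U_{\nu,w_o\eta}
=\sum_{\eta\in W_\nu(w_\nu^{-1}\omega)} U_{\nu,\eta}.
\]
No orbit-matching between $\nu$ and $-\nu$, no appeal to \cite[Sec.~5.3]{mac:affine}, and no case-by-case verification is involved.
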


\begin{proof}
Let us for the moment assume that our root multiplicity function $t$ is such that $t_\alpha$ is an integral power $\geq 2$ of $q_\alpha$. In this situation all terms of $ (D_\omega m_\lambda) \Delta$ lie in $\mathbb{C}[P]$, and it is moreover seen that
\begin{eqnarray*}
&& \langle D_\omega m_\lambda ,m_\mu\rangle_\Delta \\
&& = |W|^{-1} \sum_{\nu\in\Lambda_\omega}\sum_{\eta\in W_\nu(w_\nu^{-1}\omega)} 
\int  U_{\nu,\eta} V_\nu (T_\nu  m_\lambda ) \overline{m_\mu}  \Delta \\
&& \stackrel{(*)}{=} |W|^{-1} \sum_{\nu\in\Lambda_\omega}\sum_{\eta\in W_\nu(w_\nu^{-1}\omega)} 
\int  m_\lambda \overline{(T_{\nu} U_{\nu,-\eta} V_{-\nu} m_\mu  \Delta )} \\
&& \stackrel{(**)}{=} |W|^{-1} \sum_{\nu\in\Lambda_\omega}\sum_{\eta\in W_\nu(w_\nu^{-1}\omega)} 
\int  m_\lambda \overline{U_{\nu,\eta} V_{\nu} (T_{\nu} m_\mu )}  \Delta \\
&& =\langle  m_\lambda , D_{\omega} m_\mu\rangle_\Delta .
\end{eqnarray*}
Here we have used  in Step (*) that
$\overline {V_\nu}=V_{-\nu}$, $\overline {U_{\nu ,\eta}}=U_{\nu,-\eta}$, $\overline{\Delta}=\Delta$, and $\int (T_\nu f)\overline{g}=\int f \overline{(T_{\nu} g)}$ for all $f,g\in \mathbb{C}[P]$,
and in Step (**) that  $T_{\nu} V_{-\nu}\Delta = V_{\nu}\Delta $,
$T_{\nu} U_{\nu,-\eta}= U_{\nu,-\eta}$, and
$$ \sum_{\eta\in W_\nu(w_\nu^{-1}\omega)} U_{\nu,-\eta}=   \sum_{\eta\in W_\nu(w_\nu^{-1}\omega)} U_{\nu,w_o \eta}  = \sum_{\eta\in W_\nu(w_\nu^{-1}\omega)} U_{\nu,\eta}  ,$$ 
where $w_o$ refers to the longest element of the Weyl group $W_\nu$.
This completes the proof of the proposition (and thus that of Theorem \ref{diagonalization:thm}) for
 $t_\alpha$ an integral power $\geq 2$ of $q_\alpha$.  Since both sides of the eigenvalue equation \eqref{ev:eq} are rational expressions in $t_\alpha$ (cf.  \cite[\S 4]{mac:orthogonal}),
this is in fact sufficient to conclude
that Theorem \ref{diagonalization:thm} (and thus the present proposition) holds
for general root multiplicity functions $t$.
\end{proof}

\vspace{3ex}
\noindent {\bf Acknowledgments.} The exploratory phase of our research,
culminating in the formula of Theorem \ref{diagonalization:thm}, has benefited a lot from explicit computer algebra computations involving Stembridge's Maple packages {\tt COXETER} and
{\tt WEYL}.

\bibliographystyle{amsplain}

\end{document}